\begin{document}
\pagestyle{myheadings}
\thispagestyle{empty}
\setcounter{page}{1}
\newtheorem{definition}{Definition}
\newtheorem{proposition}{Proposition}
\newtheorem{theorem}{Theorem}
\newtheorem{lemma}{Lemma}
\newtheorem{corollary}{Corollary}
\newtheorem{remark}{Remark}

\newcommand{\Aut}{\mbox{Aut}}

\theoremstyle{plain}
\mathsurround 2pt

\textbf{Local nearrings on finite non-abelian $2$-generated $p$-groups}

\

\textbf{Iryna Iu. Raievska}\\
\emph{Institute of Mathematics of National Academy of Sciences of Ukraine,\\
3, Tereshchenkivs'ka Str., Kyiv, Ukraine, 01024}\\
raeirina@imath.kiev.ua

\

\textbf{Maryna Iu. Raievska}\\
\emph{Institute of Mathematics of National Academy of Sciences of Ukraine,\\
3, Tereshchenkivs'ka Str., Kyiv, Ukraine, 01024}\\
raemarina@imath.kiev.ua

\begin{abstract}
Finite non-abelian non-metacyclic $2$-generated $p$-groups (${p>2}$) of nilpotency class $2$ with cyclic commutator subgroup which are the additive groups of local nearrings are described. It is shown that the subgroup of all non-invertible elements is of index $p$ in its additive group.
\end{abstract}

\textbf{Keywords:} finite $p$-group, local nearring\\

\textbf{2010 Mathematics Subject Classification:} 16Y30\\

\emph{An abbreviated title:} Local nearrings\\

\emph{This work was partially supported by the grant 346300 for IMPAN from the Simons Foundation and the matching 2015-2019 Polish MNiSW fund.}

\section{Introduction}\label{sec:intro}

Nearrings are a generalization of associative rings in the sense that with the respect to addition they need not be commutative and only one distributive law is assumed. In this paper the concept ``nearring'' means a left distributive nearring with a multiplicative identity. The reader is referred to the books by Meldrum~\cite{MJ_85} or Pilz~\cite{GP_77} for terminology, definitions and basic facts concerning nearrings.

Following~\cite{MCJ_68}, the nearring with identity will be called local, if the set of all non-invertible elements forms a subgroup of its additive group. The main results concerning local nearrings are summarized in~\cite{S_08}.

In~\cite{CM_70} it is shown that every non-cyclic abelian $p$-group of order $p^n>4$ is the additive group of a zero-symmetric local nearring which is not a ring. As it was noted in~\cite{CM_71}, neither a generalized quaternion group nor a non-abelian group of order $8$ can be the additive group of a local nearring.

Therefore the structure of the non-abelian finite $p$-groups which are the additive groups of local nearrings is an open problem~\cite{Fei_06}.

It was proved that every non-metacyclic Miller--Moreno $p$-group of order $p^n>8$ is the additive group of a local nearring and the multiplicative group of such a nearring is the group of order $p^{n-1}(p-1)$ \cite{IMYa_12}. In this paper finite non-abelian non-metacyclic $2$-generated $p$-groups ($p>2$) of nilpotency class $2$ with cyclic commutator subgroup  are studied.

\section{Preliminaries}

Let $G$ be a finite non-abelian non-metacyclic $2$-generated $p$-group ($p>2$) of nilpotency class $2$ with cyclic commutator subgroup.

Denote by $G'$ and $Z(G)$ the commutator subgroup and the centre of $G$, respectively.

Let $a$ and $b$ be generators for $G$ such that $G/G'=\langle aG'\rangle \times \langle bG'\rangle$, $aG'$ has order $p^m$ and $bG'$ has order $p^n$. Then $c=[a,b]$ generates $G'$, $c$ has order $p^d$ with $1\leq d\leq n\leq m$, and $c\in Z(G)=\langle a^{p^m}, b^{p^n}, c\rangle$.

Suppose that $\langle a \rangle \cap G'=\langle b \rangle \cap G'=1$. Then
\[
G=\langle a,~b,~c| a^{p^m}=b^{p^n}=c^{p^d}=1,~a^b=ac,~c^a=c^b=c\rangle
\]
and each element of $G$ can be uniquely written in the form $a^{x_1}b^{x_2}c^{x_3}$, $x_1\in C_{p^m}$, $x_2\in C_{p^n}$, $x_3\in C_{p^d}$. Therefore the group $G$ with $p > 2$ will be denoted by $G(p^m,p^n,p^d)$.

\begin{lemma}\label{lemma_3}
For any natural numbers $k$ and $l$ the equality $[a^k,b^l]=c^{kl}$ holds.
\end{lemma}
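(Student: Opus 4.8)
The plan is to use the standing assumption that $G$ has nilpotency class $2$, i.e. $G'\leq Z(G)$, so that $c=[a,b]$ is central in $G$ (equivalently, this is the content of the relations $c^{a}=c^{b}=c$ in the presentation of $G(p^{m},p^{n},p^{d})$). The key consequence is that the commutator is bi-multiplicative on $G$. I would begin from the general identities $[xy,z]=[x,z]^{y}[y,z]$ and $[x,yz]=[x,z][x,y]^{z}$, which hold in any group; since $[x,z]$ and $[x,y]$ lie in $G'\leq Z(G)$, the conjugations are trivial and these collapse to $[xy,z]=[x,z][y,z]$ and $[x,yz]=[x,y][x,z]$. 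In particular each of the maps $x\mapsto[x,b]$ and $y\mapsto[a,y]$ is a homomorphism into the central cyclic subgroup $\langle c\rangle$.

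From here the statement follows by two easy inductions. First I would fix $a$ and induct on $l$ to get $[a,b^{l}]=c^{l}$: the case $l=1$ is the definition $[a,b]=c$, and $[a,b^{l+1}]=[a,b^{l}][a,b]=c^{l}c=c^{l+1}$. Then, with $l$ fixed, I would induct on $k$ to get $[a^{k},b^{l}]=c^{kl}$: the case $k=1$ is the previous step, and $[a^{k+1},b^{l}]=[a^{k},b^{l}][a,b^{l}]=c^{kl}c^{l}=c^{(k+1)l}$, using multiplicativity in the first argument. This yields $[a^{k},b^{l}]=c^{kl}$ for all natural $k$ and $l$.

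There is essentially no obstacle here: the whole argument rests only on the centrality of $c$, which is part of the hypotheses on $G$, and once that is noted both inductions are routine. Indeed the result can be summarised in one line as the bilinearity of the commutator pairing $\langle a\rangle\times\langle b\rangle\to\langle c\rangle$. If one prefers not to quote the general commutator identities, one can instead verify $[xy,z]=[x,z][y,z]$ directly for central commutators by a short expansion, which is equally painless.
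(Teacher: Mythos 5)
Your proof is correct and rests on the same essential fact as the paper's: the centrality of $c$, used to show first that $[a,b^{l}]=c^{l}$ and then to pass to $[a^{k},b^{l}]=c^{kl}$. The paper phrases this as a direct conjugation computation ($b^{-l}ab^{l}=ac^{l}$, hence $b^{-l}a^{k}b^{l}=(ac^{l})^{k}=a^{k}c^{kl}$), while you phrase it via bimultiplicativity of the commutator; these are the same argument in different notation.
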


\begin{proof}
Since $b^{-1}ab=ac$, it follows that $b^{-l}ab^l=ac^l$. Therefore, $b^{-l}a^kb^l=(ac^l)^k=a^kc^{kl}$, thus $a^{-k}b^{-l}a^kb^l=c^{kl}$.
\end{proof}

\begin{corollary}\label{corollary_1}
Let the group $G(p^m,p^n,p^d)$ be additively written. Then for any natural numbers $k$ and $l$ the equalities $-ak-bl+ak+bl=c(kl)$ and $bl+ak=-c(kl)+ak+bl$ hold.
\end{corollary}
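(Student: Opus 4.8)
The plan is to read the first identity off directly from Lemma~\ref{lemma_3}, and then to deduce the second identity from the first by an elementary rearrangement that uses only associativity and the centrality of $c$.

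First I would fix the translation between the multiplicative and the additive descriptions of $G(p^m,p^n,p^d)$: writing a typical element $a^{x_1}b^{x_2}c^{x_3}$ additively as $ax_1+bx_2+cx_3$, a power $a^k$ becomes $ak$, an inverse $a^{-k}$ becomes $-ak$, a product of elements becomes the sum of the corresponding summands taken in the same order (addition need not be commutative here), and $c^{kl}$ becomes $c(kl)$. Under this dictionary the group commutator $[a^k,b^l]=a^{-k}b^{-l}a^kb^l$ turns into $-ak-bl+ak+bl$, so Lemma~\ref{lemma_3} says precisely that $-ak-bl+ak+bl=c(kl)$.

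To get the second identity I would add $bl+ak$ on the left of both sides of the first one. On the left-hand side associativity lets me cancel $ak$ with $-ak$ and then $bl$ with $-bl$, so that $bl+ak-ak-bl+ak+bl$ reduces to $ak+bl$; on the right-hand side I obtain $bl+ak+c(kl)$. Thus $ak+bl=bl+ak+c(kl)$. Since $c\in Z(G)$, every power $c(kl)$ is central, so it may be moved past the summands $ak$ and $bl$; subtracting $c(kl)$ on the right of the last equation then gives $bl+ak=ak+bl-c(kl)=-c(kl)+ak+bl$, which is the required identity.

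The computation is short, and I do not expect a genuine obstacle. The only thing demanding attention is that addition is not assumed commutative, so the order of the summands must be preserved at every step, and the passage from expressions of the form $ak+bl+c(kl)$ to expressions of the form $-c(kl)+ak+bl$ has to be justified by invoking the centrality of $c$ (and hence of every power $c(kl)$) explicitly, rather than by a blanket appeal to commutativity.
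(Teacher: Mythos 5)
Your proof is correct and matches the paper's (implicit) reasoning: the corollary is stated as a direct additive translation of Lemma~\ref{lemma_3}, and your rearrangement using associativity and the centrality of $c$ to obtain the second identity is exactly the intended short computation.
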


\begin{lemma}\label{lemma_4}
For any natural numbers $k$, $l$ and $r$ the equality
\[
(a^kb^l)^r=a^{kr}b^{lr}c^{-kl\binom{r}{2}} \leqno ({\rm *})
\]
holds.
\end{lemma}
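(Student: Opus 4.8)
The plan is to prove the identity by induction on $r$, using the class-$2$ hypothesis on $G$ (so that $c$, being a generator of the commutator subgroup $G'$, is central, as already recorded in the presentation of $G(p^m,p^n,p^d)$) together with the commutator formula $[a^k,b^l]=c^{kl}$ of Lemma~\ref{lemma_3}.

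For the base case $r=1$ the assertion reads $(a^kb^l)^1=a^kb^lc^{-kl\binom{1}{2}}=a^kb^l$, which holds trivially since $\binom{1}{2}=0$. For the inductive step, assume $(a^kb^l)^r=a^{kr}b^{lr}c^{-kl\binom{r}{2}}$. Then, moving the central power of $c$ to the right,
\[
(a^kb^l)^{r+1}=a^{kr}b^{lr}c^{-kl\binom{r}{2}}a^kb^l=a^{kr}\bigl(b^{lr}a^k\bigr)b^l\,c^{-kl\binom{r}{2}}.
\]
By Lemma~\ref{lemma_3} applied to the pair $a^k$, $b^{lr}$ we have $[a^k,b^{lr}]=c^{klr}$, which rearranges to $b^{lr}a^k=a^kb^{lr}c^{-klr}$; substituting and again using centrality of $c$ gives
\[
(a^kb^l)^{r+1}=a^{k(r+1)}b^{l(r+1)}c^{-klr-kl\binom{r}{2}}=a^{k(r+1)}b^{l(r+1)}c^{-kl\left(r+\binom{r}{2}\right)}.
\]
It then remains only to observe the Pascal-type identity $r+\binom{r}{2}=\tfrac{r(r+1)}{2}=\binom{r+1}{2}$, which completes the induction.

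The only points needing care are that $c$ may be freely commuted past everything (this is exactly the nilpotency class $2$ assumption, equivalently $c\in Z(G)$) and that the rewriting $b^{lr}a^k=a^kb^{lr}c^{-klr}$ carries the correct exponent and sign, since this is precisely where the factor $c^{-kl\binom{r}{2}}$ accumulates. There is no real obstacle here; an alternative is to run the same computation additively using Corollary~\ref{corollary_1} — namely the relation $bl+ak=-c(kl)+ak+bl$ — commuting the inner copy of $b$ past $ak$ one step at a time and collecting the $-c(kl)$ contributions, which produces $1+2+\dots+(r-1)=\binom{r}{2}$ of them.
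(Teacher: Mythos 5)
Your proof is correct and follows essentially the same route as the paper: induction on $r$, using the centrality of $c$ together with the commutator relation of Lemma~\ref{lemma_3} (in the form $b^{lr}a^k=a^kb^{lr}c^{-klr}$) and the identity $r+\binom{r}{2}=\binom{r+1}{2}$. If anything, your write-up is slightly more careful than the paper's, which omits the explicit base-case computation and even carries a harmless sign slip in its final displayed exponent.
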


\begin{proof}
For $r=1$, there is nothing to prove. By induction on $r$, we derive
\[
(a^kb^l)^r=a^{kr}b^{lr}c^{-kl\binom{r}{2}}.
\]
Replacing $r$ by $r+1$ in equality $(*)$, we have
\[
(a^kb^l)^{(r+1)}=a^{kr}b^{lr}a^kb^lc^{-kl\binom{r}{2}}=a^{k(r+1)}b^{l(r+1)}c^{-klr}c^{-kl\binom{r}{2}}
\]
\[
=a^{k(r+1)}b^{l(r+1)}c^{-kl(r+\binom{r}{2})}=a^{k(r+1)}b^{l(r+1)}c^{kl\binom{r+1}{2}}.
\]
Thus, equality $(*)$ holds for an arbitrary $r$.
\end{proof}

\begin{corollary}\label{corollary_2}
Let the group $G(p^m,p^n,p^d)$ be additively written. Then for any natural numbers $k$, $l$ and $r$ the equality $(ak+bl)r=akr+blr-ckl\binom{r}{2}$ holds.
\end{corollary}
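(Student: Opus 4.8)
The plan is to recognise that Corollary~\ref{corollary_2} is simply the additive transcription of Lemma~\ref{lemma_4}, so essentially nothing new has to be proved: one only needs to transport the multiplicative identity $(*)$ through the dictionary between the two notations for $G(p^m,p^n,p^d)$. Under this dictionary every element $a^{x_1}b^{x_2}c^{x_3}$ of $G$ is written $ax_1+bx_2+cx_3$, the group operation becomes $+$, the $r$-th power $g^r$ becomes the $r$-fold sum $gr$, and for an integer $t$ the element $c^{t}$ becomes $ct$ (so $c^{-t}$ becomes $-ct$). Applying this to both sides of $(*)$, the left-hand side $(a^kb^l)^r$ becomes $(ak+bl)r$, while the right-hand side $a^{kr}b^{lr}c^{-kl\binom{r}{2}}$ becomes $akr+blr-ckl\binom{r}{2}$. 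Since $\binom{r}{2}=r(r-1)/2$ is a nonnegative integer, all the coefficients occurring here make sense in the respective cyclic groups $C_{p^m}$, $C_{p^n}$, $C_{p^d}$, so the transcription is legitimate and the claimed equality follows at once from Lemma~\ref{lemma_4}.

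For completeness I would also indicate how to obtain the identity directly in additive notation by an induction on $r$ parallel to the proof of Lemma~\ref{lemma_4}, using Corollary~\ref{corollary_1} in place of the relation $a^b=ac$. The base case $r=1$ is immediate. For the inductive step one writes
\[
(ak+bl)(r+1)=(ak+bl)r+(ak+bl)=akr+blr-ckl\tbinom{r}{2}+ak+bl,
\]
then uses that $c$ is central to move the $c$-term to the right end, and uses the commutation rule $b(lr)+ak=-cklr+ak+b(lr)$ from Corollary~\ref{corollary_1} to bring $ak$ next to $akr$ and $bl$ next to $blr$. Collecting terms yields $ak(r+1)+bl(r+1)-ckl\bigl(r+\tbinom{r}{2}\bigr)$, and the elementary identity $r+\binom{r}{2}=\binom{r+1}{2}$ gives exactly $ak(r+1)+bl(r+1)-ckl\binom{r+1}{2}$.

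The only point requiring any care --- and it is not really an obstacle --- is the bookkeeping of the central term: because $c\in Z(G)$, it commutes with everything, so in the inductive step the sole correction picked up when rearranging $akr+blr+ak+bl$ into $ak(r+1)+bl(r+1)$ plus a central term is the single cross term $-cklr$ produced by interchanging $b(lr)$ and $ak$ via Corollary~\ref{corollary_1}. Apart from this routine collection of exponents, the statement is a formal consequence of Lemma~\ref{lemma_4}.
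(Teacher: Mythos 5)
Your proposal is correct and matches the paper's treatment: the corollary is stated there without proof precisely because it is the additive transcription of Lemma~\ref{lemma_4}, which is exactly your main argument. Your supplementary induction in additive notation (using Corollary~\ref{corollary_1} and the identity $r+\binom{r}{2}=\binom{r+1}{2}$) is also sound and simply mirrors the paper's proof of Lemma~\ref{lemma_4}.
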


Obviously, the exponent of $G(p^m,p^n,p^d)$ is equal to $p^m$ for ${1\leq d\leq n\leq m}$.

\begin{lemma}\label{lemma_6}
If $x$ is an element of order $p^m$ of $G(p^m,p^n,p^d)$, then there exist generators $a$, $b$, $c$ of this group such that $a=x$ and ${a^{p^m}=b^{p^n}=c^{p^d}=1}$, $a^b=ac$, $c^a=c^b=c$.
\end{lemma}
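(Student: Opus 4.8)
The plan is to construct the required generators explicitly with $a=x$. First I would write $x$ in its normal form $x=a^{x_1}b^{x_2}c^{x_3}$ afforded by the presentation of $G(p^m,p^n,p^d)$ and pin down the residues of the exponents forced by $|x|=p^m$. Since $c$ is central, Lemma~\ref{lemma_4} gives $x^r=a^{x_1 r}b^{x_2 r}c^{x_3 r-x_1 x_2\binom{r}{2}}$; because $\exp G(p^m,p^n,p^d)=p^m$, the order of $x$ is $p^m$ exactly when $x^{p^{m-1}}\neq 1$. Reading this off the uniqueness of the normal form and using $d\le n\le m$, I would deduce that $p\nmid x_1$ in the principal case, the only exceptions being $n=m$ (where one may instead have $p\nmid x_2$) or the degenerate case $d=m$.

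In the principal case $p\nmid x_1$, take $a'=x$, $b'=b$ and $c'=[a',b']$. Because the $b$-part and the central $c$-part of $x$ do not contribute to the commutator, Lemma~\ref{lemma_3} gives $c'=c^{x_1}$, which generates $G'$ since $p\nmid x_1$ and is therefore central, so $(c')^{a'}=(c')^{b'}=c'$ and $(c')^{p^d}=1$. A short computation from $a^b=ac$ and the centrality of $c$ yields $x^b=x\,c^{x_1}$, i.e. $(a')^{b'}=a'c'$; and $(a')^{p^m}=x^{p^m}=1$, $(b')^{p^n}=b^{p^n}=1$ are immediate. It remains to check generation and the intersection condition: $\langle a',b'\rangle=\langle x,b\rangle$ contains $c^{x_1}$, hence $c$, hence $a^{x_1}$, hence $a$, so it equals $G$; and $\langle a'\rangle\cap G'=1$, since by uniqueness of the normal form any power $x^r$ lying in $\langle c\rangle$ must have $p^m\mid x_1 r$, so $p^m\mid r$, so $x^r=1$. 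Thus $(a',b',c')$ is a generating triple of the required type.

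When $p\mid x_1$ one has $n=m$ or $d=m$. If $n=m$ the two generators are interchangeable — $[b,a]=c^{-1}$ also generates $G'$ and $a$ has order $p^m$ — so I would repeat the construction with the roles of $a$ and $b$ swapped: take $a'=x$, $b'=a$, $c'=[x,a]=c^{-x_2}$ (now $p\nmid x_2$), and verify $x^a=x\,c^{-x_2}$, the order relations, and generation exactly as before. The step I expect to be the genuine obstacle is excluding the degenerate configuration $p\mid x_1$, $p\mid x_2$, $d=m$: there $x\in\Phi(G)=\langle a^p,b^p\rangle G'$ and so cannot belong to any two-element generating set of $G$, so this case must be shown impossible under the standing hypotheses on $G(p^m,p^n,p^d)$ (or ruled out by the reductions already made). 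Everything else is routine verification via Lemmas~\ref{lemma_3} and~\ref{lemma_4}.
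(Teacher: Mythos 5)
In the two cases you actually treat, your construction is the same as the paper's: the paper writes $x=a^{\alpha}b^{\beta}c^{\gamma}$, observes that $|x|=p^m$ forces $(\alpha,p)=1$, or $(\beta,p)=1$ with $m=n$, or $(\gamma,p)=1$ with $m=n=d$, then assumes $(\alpha,p)=1$ ``without loss of generality'', checks $\langle x,b\rangle=G$ and $b^{-1}xb=xc^{\alpha}$, and replaces $c$ by $c^{\alpha}$; your verification of the orders, of generation, of $\langle x\rangle\cap G'=1$, and your explicit $a\leftrightarrow b$ swap for $m=n$ just fill in that outline. The step you flag as the genuine obstacle is indeed a gap, but it cannot be closed in the way you hope: the configuration $p\mid x_1$, $p\mid x_2$, $d=m$ is not excluded by the standing hypotheses, and for it the statement of the lemma is false. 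Indeed, if $m=n=d$ then by Lemma~\ref{lemma_4} any $x=a^{x_1}b^{x_2}c^{x_3}$ with $p\mid x_1$, $p\mid x_2$, $p\nmid x_3$ (for instance $x=c$) satisfies $x^{p^{m-1}}\neq 1$ and hence has order $p^m$, yet $x\in\Phi(G)=\langle a^p,b^p,c\rangle$; since the relations of the lemma force $c'=[a',b']$ and hence $G=\langle a',b'\rangle$, no element of $\Phi(G)$ can serve as $a'$ (for the central element $x=c$ one even gets $c'=1$ and $\langle x,b'\rangle$ abelian). So your proof is incomplete at exactly this point --- as is the paper's, whose ``without loss of generality $(\alpha,p)=1$'' covers the $m=n$ swap but silently discards the subcase $m=n=d$, $p\mid\alpha$, $p\mid\beta$, $(\gamma,p)=1$.

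The lemma becomes correct, and your argument complete, if one adds the hypothesis $x\notin\Phi(G)$: together with $|x|=p^m$ this leaves precisely your two cases, since $p\mid x_1$ and $|x|=p^m$ already force $m=n$. This added hypothesis is also what the intended application provides: for a nearring $R$ with identity $i$, each left multiplication $\lambda_x\colon y\mapsto xy$ is an endomorphism of $R^+$ with $\lambda_x(i)=x$, and $\Phi(R^+)$ is fully invariant, so $i\in\Phi(R^+)$ would give $R^+=\Phi(R^+)$; hence $i\notin\Phi(R^+)$ and the use of the lemma in Corollary~\ref{corollary_3} is unaffected.
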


\begin{proof}

Indeed, for each $x\in G(p^m,p^n,p^d)$ there exist positive integers $\alpha$, $\beta$ and $\gamma$ such that $x=a^\alpha b^\beta c^\gamma$. Thus, we have
\[
x^{p^m}=(a^\alpha b^\beta c^\gamma)^{p^m}=(a^\alpha b^\beta)^{p^m} c^{\gamma {p^m}}=a^{\alpha p^m}b^{\beta p^m}c^{\gamma {p^m} -\alpha \beta {p^m\choose 2}}
\]
\[
={a^{p^m}}^\alpha b^{{p^m}\beta} c^{p^m(\gamma-\alpha \beta \frac {(p^m-1)}{2})}=1
\]
by Lemma~\ref{lemma_4}. Since $|a|=p^m$ and $1\leq d\leq n \leq m$, where $m>1$ and $p>2$, it follows that the exponent of $G(p^m,p^n,p^d)$ equals $p^m$.

If
\[
x^{p^{m-1}}=a^{p^{m-1}\alpha} b^{p^{m-1}\beta}c^{p^{m-1}(\gamma-\alpha \beta \frac {(p^{m-1}-1)}{2})}\neq 1,
\]
then either $(\alpha,p)=1$, or $(\beta,p)=1$ for $m=n$, or $(\gamma,p)=1$ for $m=n=d$. So, without loss of generality, we can assume that $(\alpha,p)=1$. Then
\[
\langle x, b \rangle=\langle a^\alpha b^\beta c^\gamma, b \rangle=\langle a^\alpha, b \rangle=\langle a, b \rangle=G
\]
and
\[
b^{-1}xb=b^{-1}(a^\alpha b^\beta c^\gamma)b=(ac)^\alpha b^\beta c^\gamma=(a^\alpha b^\beta c^\gamma)c^\alpha=xc^\alpha.
\]
Furthermore, substituting $c^\alpha$ instead of $c$ for generators $x$ and $b$ of $G(p^m,p^n,p^d)$, we have similar expressions as for generators $a$ and $b$, thus replacing the element $a$ by $x$.
\end{proof}

The following assertion concerning the automorphisms group of $G(p^m,p^n,p^d)$ is a direct consequence of statement~(B1)~\cite{Me_1993}.

\begin{lemma}\label{lemma_5}
Let $G=G(p^m,p^n,p^d)$ and let $\mbox{Aut}(G)$ be the automorphism group of $G$. Then the following statements hold{\rm :}
\begin{itemize}
  \item[\rm{1)}] if $m = n$, then $|\mbox{Aut}(G)|=p^{2d+4m-5}(p^2-1)(p-1)${\rm ;}
  \item[\rm{2)}] if $m > n$, then $|\mbox{Aut}(G)|=p^{2d+3n+m-2}(p-1)^2$.
\end{itemize}
\end{lemma}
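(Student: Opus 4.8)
The plan is to read off $|\Aut(G)|$ from statement~(B1) of~\cite{Me_1993}, specialised to $G=G(p^m,p^n,p^d)$: this group is $2$-generated of class $2$ with $G'\cong C_{p^d}$ and $G/G'\cong C_{p^m}\times C_{p^n}$, so it remains only to substitute $m$, $n$, $d$ into the formula of (B1) and to split according to whether $G/G'$ is homocyclic ($m=n$) or not ($m>n$) --- which is exactly the dichotomy in the statement. To make the source of the two forms transparent I would reconstruct the count directly, as follows.

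Since $G=\langle a,b\rangle$ and $c=[a,b]$, an endomorphism $\varphi$ of $G$ is determined by $A=\varphi(a)$ and $B=\varphi(b)$, and conversely a pair $(A,B)$ --- together with $C:=[A,B]$ --- extends to an endomorphism exactly when the defining relations of $G$ hold. Writing $A=a^{\alpha_1}b^{\beta_1}c^{\gamma_1}$ and $B=a^{\alpha_2}b^{\beta_2}c^{\gamma_2}$, I would use Lemmas~\ref{lemma_3} and~\ref{lemma_4} together with $G'\leq Z(G)$ and the fact that $G$ has exponent $p^m$ to check that the relations $A^{p^m}=1$, $C^{p^d}=1$, $C^A=C^B=C$ and $A^B=AC$ hold automatically for every such pair (here one uses that $p$ is odd, so that $\binom{p^n}{2}\equiv 0\pmod{p^d}$ and the $c$-parts of the relevant powers vanish); that the only genuine restriction is $B^{p^n}=1$, which reduces to $p^{m-n}\mid\alpha_2$; and that a homomorphism of this shape is an automorphism iff $\langle A,B\rangle=G$, which by the Burnside basis theorem is the single congruence $\alpha_1\beta_2-\alpha_2\beta_1\not\equiv 0\pmod p$ on the images in $G/\Phi(G)\cong C_p\times C_p$. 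The parameters $\gamma_1,\gamma_2$ are free and contribute a factor $p^{2d}$.

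It then remains to count the admissible quadruples $(\alpha_1,\beta_1,\alpha_2,\beta_2)$, each coordinate ranging over $C_{p^m}$ or $C_{p^n}$ as appropriate. If $m>n$, the condition $p^{m-n}\mid\alpha_2$ forces $\alpha_2\equiv 0\pmod p$, so the non-degeneracy condition becomes $p\nmid\alpha_1$ and $p\nmid\beta_2$; if $m=n$, there is no divisibility condition and the reductions modulo $p$ with nonzero ``determinant'' run over $GL_2(\mathbb{F}_p)$, each with the same number of lifts. Multiplying the numbers of $p$-power lifts in each coordinate (together with the factor $p^{2d}$) gives $|\Aut(G)|$ in each case; this is the computation that (B1) of~\cite{Me_1993} records in closed form. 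I expect the main obstacle to be precisely this final bookkeeping: tracking the cyclic ranges $C_{p^m}$, $C_{p^n}$, $C_{p^d}$, passing correctly between them and their reductions modulo $p$ in the Burnside criterion, and keeping the two cases $m=n$ and $m>n$ cleanly apart.
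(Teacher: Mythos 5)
Your plan diverges from the paper, which offers no computation at all: Lemma~\ref{lemma_5} is there obtained purely as a quotation of statement~(B1) of~\cite{Me_1993}. Your independent reconstruction of the count is structurally sound: an endomorphism is indeed determined by $A=\varphi(a)$, $B=\varphi(b)$ with $C=[A,B]$; since $G$ has class $2$, exponent $p^m$ and $p$ is odd, the only surviving relation is $B^{p^n}=1$, which (using $\binom{p^n}{2}\equiv 0\pmod{p^d}$) reduces to $p^{m-n}\mid\alpha_2$; surjectivity is the Burnside criterion $\alpha_1\beta_2-\alpha_2\beta_1\not\equiv 0\pmod p$; and $\gamma_1,\gamma_2$ are free, contributing $p^{2d}$. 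Carried out, this gives in the case $m>n$ exactly $p^{m-1}(p-1)\cdot p^n\cdot p^n\cdot p^{n-1}(p-1)\cdot p^{2d}=p^{2d+3n+m-2}(p-1)^2$, i.e.\ statement~2).

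The genuine gap is precisely the ``final bookkeeping'' you defer. In the case $m=n$ your reduction yields
\[
|\Aut(G)|=p^{2d}\cdot p^{4(m-1)}\cdot|GL_2(\mathbb F_p)|=p^{2d+4m-3}(p^2-1)(p-1),
\]
which is $p^2$ times the exponent $2d+4m-5$ appearing in statement~1), so you cannot simply ``expect'' the multiplication to reproduce the displayed formula --- it does not. A concrete cross-check: for $m=n=d=1$ the group $G(p,p,p)$ is the extraspecial group of order $p^3$ and exponent $p$, whose automorphism group is well known to have order $p^2\,|GL_2(\mathbb F_p)|=p^3(p^2-1)(p-1)$ (for $p=3$ this is $432$), in agreement with the direct count and not with $p^{2d+4m-5}(p^2-1)(p-1)=48$. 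So as written your proposal does not prove statement~1); completing it honestly forces you either to find a constraint your reduction misses (none is apparent --- every pair $(A,B)$ with nonzero determinant modulo $p$ does give an automorphism, e.g.\ $a\mapsto ac^{\gamma_1}$, $b\mapsto bc^{\gamma_2}$ are all automorphisms), or to go back to Menegazzo's (B1) and check how its parameters and hypotheses match $G(p^m,p^m,p^d)$, i.e.\ whether the exponent in statement~1) is being quoted correctly. Until that discrepancy is resolved, the case $m=n$ of the lemma is not established by your argument; only the case $m>n$ is.
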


An information about a group of automorphisms of $G(p^m,p^m,p^d)$ is given by the following lemma.

\begin{lemma}\label{Aut2}
Let $G=G(p^m,p^m,p^d)$ and let there exist a subgroup $A$ of $\mbox{Aut} G$ of order $p^{2m+d-2}(p^2-1)$, where $m,~d>1$ with odd $p$. If an element $g\in G$ of order $p^m$ and $A$ contains Sylow normal $p$-subgroup, then $G\ne g^A\cup \Phi(G)$.
\end{lemma}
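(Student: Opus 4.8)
The plan is to argue by contradiction, first reducing the statement to a sharply transitive action of $A$ and then using the normal Sylow $p$-subgroup to pin that action down. I begin by collecting orders: since $G$ is $2$-generated, $[G:\Phi(G)]=p^{2}$, and the normal form gives $|G|=p^{2m+d}$, whence $|\Phi(G)|=p^{2m+d-2}$ and $|G\setminus\Phi(G)|=p^{2m+d-2}(p^{2}-1)=|A|$. If $g\in\Phi(G)$ then $g^{A}\subseteq\Phi(G)\subsetneq G$ and there is nothing to prove, so assume $g\notin\Phi(G)$; by Lemma~\ref{lemma_6} we may then take $g=a$. Suppose for contradiction that $G=a^{A}\cup\Phi(G)$. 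Since $\Phi(G)$ is characteristic, $a^{A}\cap\Phi(G)=\emptyset$, whence $a^{A}=G\setminus\Phi(G)$ and $|a^{A}|=|A|$; therefore $A$ acts regularly on $G\setminus\Phi(G)$, and the stabiliser in $A$ of every element of $G\setminus\Phi(G)$ is trivial — in particular $A_{a}=1$.

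Next I would bring in the normal Sylow $p$-subgroup $P\trianglelefteq A$, so $|P|=p^{2m+d-2}$, and write $A=P\rtimes H$ with $|H|=p^{2}-1$ (Schur--Zassenhaus). Consider $\rho\colon A\to\Aut(G/\Phi(G))=GL_{2}(p)$. As $A$ permutes the $p^{2}-1$ non-trivial cosets of $\Phi(G)$ transitively (the orbit $a^{A}$ meets each of them), $\rho(A)$ is transitive on $\mathbb{F}_{p}^{2}\setminus\{0\}$; combined with the fact that $|\rho(A)|$ divides both $|A|$ and $|GL_{2}(p)|$, this forces $|\rho(A)|\in\{p^{2}-1,\ p(p^{2}-1)\}$ and hence $|\rho(P)|\in\{1,p\}$. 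In either case $\rho(P)$ fixes some non-trivial line $\overline{\ell}$ of $G/\Phi(G)$ pointwise; every element of $G$ lying over $\overline{\ell}\setminus\{0\}$ lies outside $\Phi(G)$, so its image in $G/G'$ has order $p^{m}$ and, the exponent of $G$ being $p^{m}$, the element itself has order $p^{m}$. Thus, using Lemma~\ref{lemma_6}, I may take the generator $a$ with $\overline{a}\in\overline{\ell}$. Then $P$ leaves the coset $a\Phi(G)$ invariant and, having trivial point-stabilisers there (they lie in $A_{a}=1$), acts on it \emph{regularly} because $|P|=|\Phi(G)|=|a\Phi(G)|$; when $\rho(P)=1$ the same argument gives a regular action of $P$ on $b\Phi(G)$ as well.

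Finally I would turn the regular action of $P$ on $a\Phi(G)$ (and, when $\rho(P)=1$, simultaneously on $b\Phi(G)$) into a contradiction. Each $\psi\in P$ is determined by the pair $(\psi(a),\psi(b))$, with $\psi(a)=a\,z(\psi)$ for a uniquely determined $z(\psi)\in\Phi(G)$, and regularity makes $\psi\mapsto z(\psi)$ a bijection $P\to\Phi(G)$ (and likewise for the $b$-coordinate when $\rho(P)=1$). Using the power formula of Lemma~\ref{lemma_4} and the commutator formula of Lemma~\ref{lemma_3} one can compute how such a $\psi$ acts on the generators $a^{p},b^{p},c$ of $\Phi(G)$, hence on the characteristic sections $\Phi(G)/\Phi(G)^{p}$ and $Z(G)=\langle c\rangle$; the requirement that $P$ realise \emph{every} value $z\in\Phi(G)$ on $a$ then forces $P$ to induce on these sections groups too large for a $p$-group of order $p^{2m+d-2}$, contradicting either $|P|=p^{2m+d-2}$ or the order of $\Aut G$ given by Lemma~\ref{lemma_5}. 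I expect this last step to be the main obstacle: the computation must be carried out separately in the cases $m>d$ and $m=d$, and in the cases $\rho(P)=1$ — where $P$ consists of Frattini automorphisms and $A/P$ is a Singer cycle of order $p^{2}-1$ — and $|\rho(P)|=p$, checking in each that the automorphisms the regular action of $P$ would require do not exist in $\Aut G$.
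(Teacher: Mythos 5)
Your opening reduction is sound and in fact coincides with the paper's first step: since $g\notin\Phi(G)$ (otherwise $g^A\cup\Phi(G)=\Phi(G)\neq G$), the characteristic subgroup $\Phi(G)$ is disjoint from $g^A$, so the assumption $G=g^A\cup\Phi(G)$ forces $|g^A|=p^{2m+d-2}(p^2-1)=|A|$ and trivial point stabilizers; your further observations about $\rho\colon A\to\mbox{Aut}(G/\Phi(G))\cong GL_2(p)$, the bound $|\rho(P)|\in\{1,p\}$, a line fixed pointwise by $\rho(P)$, and the resulting regular action of the normal Sylow $p$-subgroup $P$ on the coset $a\Phi(G)$ are all correct. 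But at exactly the point where a contradiction must be produced the proposal stops being a proof: you assert that writing $\psi(a)=a\,z(\psi)$ and computing the induced action on $\Phi(G)/\Phi(G)^p$ and on $\langle c\rangle$ ``forces $P$ to induce groups too large,'' and you yourself flag this as the main obstacle, to be checked case by case for $m>d$, $m=d$, $|\rho(P)|=1$ and $|\rho(P)|=p$. Nothing established up to that point forbids a regular action of a group of order $p^{2m+d-2}$ on a coset of size $p^{2m+d-2}$, so the whole content of the lemma is concentrated in this unperformed computation; as written, the claim that the required automorphisms do not exist in $\Aut G$ is a hope, not a demonstrated fact. (A smaller issue: when $\rho(P)=1$ you describe $\rho(A)$ as a Singer cycle, but a sharply transitive subgroup of $GL_2(p)$ of order $p^2-1$ is the multiplicative group of a nearfield of order $p^2$ and need not be cyclic, so even the structural input for that case is not yet in place.)

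For comparison, the paper closes the argument by a different device: from $C_A(a)=1$ it passes to the normal subgroup $B=C_A(a\langle c^p\rangle)$ of order $p^{d-1}$, then to $\bar G=G/\langle c^p\rangle$ and $\bar A=A/B$, checks $|\bar a^{\bar A}|=p^{2m-1}(p^2-1)$ and $\Phi(\bar G)=Z(\bar G)$, so that $\bar G$ is a Miller--Moreno group satisfying $\bar G=\bar a^{\bar A}\cup Z(\bar G)$, and this configuration is excluded by Lemma~7 of \cite{IMYa_12}. To complete your route you would either have to carry out the generator-image computations in full in all the cases you list, or reduce, as the paper does, to the already-settled Miller--Moreno situation by factoring out $\langle c^p\rangle$.
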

\begin{proof}
Assume that $G=g^A\cup \Phi(G)$. Then $G={(\langle a \rangle\times \langle c \rangle) \rtimes \langle b \rangle}$ with generators $a$, $b$ of order $p^m$ and a central commutator $c=[a,b]$ of order $p^d$ by the definition. Hence
\[
\Phi(G)=(\langle a^p \rangle\times \langle c \rangle)\rtimes \langle b^p \rangle,
\]
and thus all elements of order $p^m$ are contained in $g^A$. Furthermore, $a=g^u$ for some $u\in A$, hence $g^A=a^A$, i.~e. $G=a^A\cup \Phi(G)$. Since $|G|=p^{2m+d}$ and $\Phi(G)=p^{2m+d-2}$, it follows that
\[
|a^A|=|G|-|\Phi(G)|=p^{2m+d-2}(p^2-1),
\]
and so the centralizer $C_A(a)$ of $a$ in $A$ equals $1$. In particular, $(a\langle c^p \rangle)^A=(a\langle c^p \rangle)^B=a\langle c^p \rangle$ for the normal subgroup $B= C_A(a\langle c^p \rangle)$ of order $p^{d-1}$ in $A$.

Considering the factor-group $\bar{G}=G/\langle c^p \rangle$ and $\bar{A}=A/B$. Taking into consideration, that $|\bar{a}^{\bar{A}}|=p^{2m-1}(p^2-1)$, we have $\bar{G}=\bar{a}^{\bar{A}}\cup \Phi(\bar{G})$. Since $|\Phi(\bar{G})|=|Z(\bar{G})|$ and $xy=yx$ for all $x\in \Phi(\bar{G})$, $y\in \bar{G}$, we have $\Phi(\bar{G})=Z(\bar{G})$. Therefore, $\bar{G}$ is a Miller--Moreno group. Since $\bar{G}=\bar{a}^{\bar{A}}\cup Z(\bar{G})$, the latter equality is impossible by~\cite[Lemma 7]{IMYa_12}. This contradiction completes the proof.
\end{proof}

\section{Nearrings with identity on $G(p^m,p^n,p^d)$}\label{sec:fm}

First recall some basic concepts of the theory of nearrings.

\begin{definition}\label{nr}
A set $R$ with two binary operations ``$+"$ and ``$\cdot"$ is called a (left) nearring if the following statements hold:
\begin{enumerate}
  \item [(1)] $(R,+)=R^{+}$ is a (not necessarily abelian) group with neutral element~$0$;
  \item [(2)] $(R,\cdot)$ is a semigroup;
  \item [(3)] $x(y+z)=xy+xz$ for all $x$, $y$, $z\in R$.
\end{enumerate}
\end{definition}

If $R$ is a nearring, then the group $R^+$  is called the {\em additive group} of $R$.  If in addition $0\cdot x=0$, then the nearring $R$ is called {\em zero-symmetric} and if the semigroup $(R,\cdot)$ is a monoid, i.e. it has an identity element $i$, then  $R$ is a {\em nearring with identity} $i$. In the latter case the group $R^*$ of all invertible elements of the monoid $(R,\cdot)$ is called the {\em multiplicative group} of $R$.

The following assertion is well-known.

\begin{lemma}\label{lemma_2}
Let $R$ be a finite nearring with identity $i$. Then the exponent of $R^+$ is equal to the additive order of $i$ which coincides with additive order of every element of $R^*$.
\end{lemma}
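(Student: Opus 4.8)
The plan is to push all additive-order information through the left distributive law, using the identity element as a pivot. First I would record the routine fact that $x\cdot 0=0$ for every $x\in R$: indeed $x\cdot 0=x(0+0)=x\cdot 0+x\cdot 0$, and cancelling in the group $R^{+}$ gives the claim. Next, for a positive integer $n$ write $ni$ for the $n$-fold sum $i+i+\dots+i$; by induction on $n$ I would show that $x\cdot(ni)=nx$ for all $x\in R$. The base case $x\cdot i=x$ is the defining property of the identity, and the inductive step is $x\bigl((n+1)i\bigr)=x(ni+i)=x(ni)+x\cdot i=nx+x=(n+1)x$, which uses only left distributivity.

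Let $e$ be the additive order of $i$, so that $ei=0$. Substituting into the previous identity yields $ex=x\cdot(ei)=x\cdot 0=0$ for every $x\in R$, hence the exponent of $R^{+}$ divides $e$; since $i\in R$, the order $e$ of $i$ divides the exponent as well, so the exponent of $R^{+}$ equals $e$. For the last assertion, take $u\in R^{*}$ with two-sided multiplicative inverse $u^{-1}$. From $\exp(R^{+})=e$ we have $eu=0$, so the additive order of $u$ divides $e$. Conversely, if $ku=0$ for some positive integer $k$, then applying left distributivity to the sum $ku=u+\dots+u$ gives $0=u^{-1}\cdot 0=u^{-1}(ku)=k\,(u^{-1}u)=ki$, whence $e\mid k$. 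Therefore the additive order of $u$ is exactly $e$, as required.

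I do not expect a genuine obstacle here; the statement is elementary. The only point that needs attention is that a (left) nearring is not assumed to be right distributive, so one may distribute a product over a repeated sum only when that sum occurs as the right-hand factor. This is precisely the configuration met in both steps, namely $x\cdot(ni)$ and $u^{-1}(ku)$, so the argument goes through without invoking the missing distributive law.
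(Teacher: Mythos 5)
Your proof is correct, and it is exactly the standard argument behind this fact: the paper itself gives no proof (it records the lemma as ``well-known''), and what it is alluding to is precisely your computation, namely $x\cdot 0=0$, then $x(ni)=nx$ by left distributivity so that the additive order $e$ of $i$ bounds the exponent of $R^{+}$, and finally $u^{-1}(ku)=ki$ to force the order of each $u\in R^{*}$ up to $e$. You also correctly flag the one delicate point, that only left distributivity is available, and both of your distributions place the repeated sum in the right-hand factor, so the argument is sound as written.
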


As a direct consequence of Lemmas~\ref{lemma_6}~and~\ref{lemma_2} we have the following corollary.

\begin{corollary}\label{corollary_3}
Let $R$ be a nearring with identity $i$ whose group $R^+$ is isomorphic to a group $G(p^m,p^n,p^d)$. Then ${R^{+}=\langle a \rangle+\langle b \rangle+\langle c \rangle}$ with elements $a$, $b$ and $c$, satisfying relations $ap^m=bp^n=cp^d=0$, $-b+a+b=a+c$ and $-a+c+a=-b+c+b=c$ with $1\leq d\leq n \leq m$, where $a=i$.
\end{corollary}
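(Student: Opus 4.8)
The plan is to derive this directly from the two lemmas cited immediately before it. First I would apply Lemma~\ref{lemma_2}: since $R^{+}\cong G(p^m,p^n,p^d)$ is finite, $R$ is a finite nearring with identity $i$, so the exponent of $R^{+}$ equals the additive order of $i$. By the remark preceding Lemma~\ref{lemma_6}, the exponent of $G(p^m,p^n,p^d)$ is $p^m$; hence $i$ has additive order exactly $p^m$ in $R^{+}$.

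Next I would transfer Lemma~\ref{lemma_6} into additive notation. Identifying $R^{+}$ with $G(p^m,p^n,p^d)$ and viewing $i$ as an element of this group of order $p^m$, Lemma~\ref{lemma_6} supplies generators which, written multiplicatively, satisfy $a=i$, $a^{p^m}=b^{p^n}=c^{p^d}=1$, $a^b=ac$ and $c^a=c^b=c$. Rewriting each relation additively — using the convention already fixed in Corollary~\ref{corollary_1} that conjugation $x^y$ corresponds to $-y+x+y$ — gives $ap^m=bp^n=cp^d=0$, $-b+a+b=a+c$ and $-a+c+a=-b+c+b=c$, with $1\leq d\leq n\leq m$.

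Finally, since every element of $G(p^m,p^n,p^d)$ has a unique expression $a^{x_1}b^{x_2}c^{x_3}$ in the chosen generators, in additive notation every element of $R^{+}$ is of the form $ax_1+bx_2+cx_3$, so $R^{+}=\langle a\rangle+\langle b\rangle+\langle c\rangle$ with $a=i$, as required. I do not expect a genuine obstacle here: the statement is essentially a restatement of Lemma~\ref{lemma_6} for the distinguished element $i$, and the only point requiring care is the bookkeeping in passing from multiplicative to additive notation.
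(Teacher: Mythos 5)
Your argument is correct and is exactly the route the paper intends: it states the corollary as a direct consequence of Lemma~\ref{lemma_2} (the identity $i$ has additive order equal to the exponent $p^m$ of $R^{+}$) and Lemma~\ref{lemma_6} (any element of order $p^m$ can serve as the generator $a$), followed by the same multiplicative-to-additive translation. No gaps; your write-up simply makes explicit what the paper leaves implicit.
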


The following statement \cite[Lemma~1]{RS_2012} establishes the connection between the automorphism group of the additive group of the nearring with identity and its multiplicative group.

\begin{lemma}\label{rem}
Let $R$ be a nearring with identity $i$. Then there exists a subgroup $A$ of the automorphism group $\mbox{Aut} R^+$ which is isomorphic to $R^*$ and satisfying the condition $i^A=\{i^a\mid a\in A\}=R^*$.
\end{lemma}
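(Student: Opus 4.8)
The plan is to realize each invertible element of $R$ as an automorphism of the additive group $R^{+}$ through left multiplication. For $u\in R$ I would set $\lambda_u\colon R^{+}\to R^{+}$, $\lambda_u(x)=ux$. Because $R$ is a \emph{left} nearring, the left distributive law gives $\lambda_u(x+y)=u(x+y)=ux+uy=\lambda_u(x)+\lambda_u(y)$, so $\lambda_u$ is an endomorphism of $R^{+}$; associativity of multiplication then yields $\lambda_{uv}=\lambda_u\circ\lambda_v$ for all $u,v\in R$, and $\lambda_i=\mathrm{id}_{R^{+}}$ since $i$ is the multiplicative identity.

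Next I would note that if $u\in R^{*}$, then $\lambda_u$ is bijective with inverse $\lambda_{u^{-1}}$, because $\lambda_u\circ\lambda_{u^{-1}}=\lambda_{uu^{-1}}=\lambda_i=\mathrm{id}$ and likewise on the other side; hence $\lambda_u\in\Aut R^{+}$. Therefore $u\mapsto\lambda_u$ is a group homomorphism $\varphi\colon R^{*}\to\Aut R^{+}$, and I would take $A:=\varphi(R^{*})=\{\lambda_u\mid u\in R^{*}\}$, a subgroup of $\Aut R^{+}$. To obtain $A\cong R^{*}$ I would show $\varphi$ is injective by evaluating at $i$: if $\lambda_u=\lambda_v$, then $u=ui=\lambda_u(i)=\lambda_v(i)=vi=v$, so $\varphi$ is an isomorphism of $R^{*}$ onto $A$.

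Finally, for the orbit condition, for every $u\in R^{*}$ one has $i^{\lambda_u}=\lambda_u(i)=ui=u$, whence $i^{A}=\{\,i^{a}\mid a\in A\,\}=\{\,u\mid u\in R^{*}\,\}=R^{*}$, which is exactly what is claimed.

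There is no real difficulty here; the one point requiring care is the side of the action — it must be \emph{left} multiplications $\lambda_u$, since only the left distributive law is available, so right multiplications need not be additive — and it is precisely the identity $\lambda_u(i)=u$ that simultaneously makes $\varphi$ injective and pins down the $A$-orbit of $i$. As the statement is quoted from \cite[Lemma~1]{RS_2012}, I would keep the exposition brief.
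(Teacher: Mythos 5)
Your argument is correct: left multiplications $\lambda_u$ are additive by the left distributive law, $u\mapsto\lambda_u$ embeds $R^{*}$ into $\Aut R^{+}$ with $\lambda_u(i)=u$ giving both injectivity and $i^{A}=R^{*}$. The paper itself does not prove this lemma but quotes it from \cite[Lemma~1]{RS_2012}, and your proof is exactly the standard argument behind that reference, so there is nothing further to reconcile.
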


The subgroup $A$ defined in Lemma~\ref{rem} is called the automorphism group of the group $R^+$ associated with the group $R^*$.

The following statement \cite[Theorem~54]{S_08} concerns the structure of $L$ which is the subgroup of all non-invertible elements of finite local nearring $R$. Let $\Phi(G)$ denote the Frattini subgroup of $G$.

\begin{theorem}\label{theorem_FR}
Let $R$ be a local nearring of order $p^n$ and let $G(R) = R^+ \rtimes R^*$ be a group associated with $R$. Then $H=R^+ \rtimes (i + L)$ is a Sylow normal $p$-subgroup of $G(R)$ and $L = R^+ \cap \Phi(H)$. In particular, if $L$ is non-abelian, then its center is non-cyclic.
\end{theorem}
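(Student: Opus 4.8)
The plan is to pass to the associated group $G(R)=R^{+}\rtimes R^{*}$ and, via Lemma~\ref{rem}, to identify $R^{*}$ with the subgroup $A=\{\lambda_{u}\mid u\in R^{*}\}$ of $\Aut R^{+}$, where $\lambda_{u}\colon x\mapsto ux$ is left multiplication, so that $i^{A}=R^{*}=R^{+}\setminus L$. The first step is to show that the coset $i+L$ of $1$-units is a $p$-subgroup of $R^{*}$. If some $i+\ell$ with $\ell\in L$ were non-invertible, then $i=(i+\ell)+(-\ell)$ would lie in the subgroup $L$, which is impossible; hence $i+L\subseteq R^{*}$. By the left distributive law $(i+\ell_{1})(i+\ell_{2})=i+\ell_{1}+\ell_{2}+\ell_{1}\ell_{2}$, and since a left inverse of $\ell_{1}\ell_{2}$ would be a left inverse of $\ell_{2}$ we have $\ell_{1}\ell_{2}\in L$, so $\ell_{1}+\ell_{2}+\ell_{1}\ell_{2}\in L$ and $i+L$ is closed under multiplication; being a finite subsemigroup of the group $R^{*}$, it is a subgroup, of order $|L|$. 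Putting $k=[R^{*}:i+L]$ and using $|R^{*}|=|R^{+}|-|L|=p^{n}-|L|$, one gets $(k+1)|L|=p^{n}$, whence $|L|=p^{n-s}$ and $k=p^{s}-1$ for some $s\geq1$; in particular $|i+L|=p^{n-s}$ is a power of $p$.

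Because $R^{+}\trianglelefteq G(R)$, the product $H=R^{+}(i+L)$ is a subgroup, namely $R^{+}\rtimes(i+L)$, of order $p^{n}\cdot p^{n-s}=p^{2n-s}$. Since $|G(R)|=p^{n}|R^{*}|=p^{2n-s}(p^{s}-1)$ and $p\nmid p^{s}-1$, this is the full $p$-part of $|G(R)|$, so $H$ is a Sylow $p$-subgroup. For its normality it is enough, as $G(R)/R^{+}\cong R^{*}$ and $H/R^{+}\cong i+L$, to prove $i+L\trianglelefteq R^{*}$; the natural route is to use that the set $L$ of non-units is an ideal of $R$, for then $R/L$ is a local nearring all of whose non-zero elements are invertible --- a finite nearfield of order $p^{s}$ --- and $i+L$ is precisely the kernel of the induced epimorphism $R^{*}\to(R/L)^{*}$, hence normal. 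Verifying that $L$ is an ideal, and in particular that $L\trianglelefteq R^{+}$, is the step I expect to be the main obstacle: with only one distributive law available, conjugating $L$ by an invertible element and absorbing it back into $L$ reduces the claim to the single assertion that $L$ is invariant under additive conjugation by the identity $i$, and that fact appears to require the radical theory of (not necessarily zero-symmetric) local nearrings rather than a direct manipulation.

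Granting $L\trianglelefteq R^{+}$, I would identify $R^{+}\cap\Phi(H)$ with $L$ as follows. First, $R^{*}$ acts on $R^{+}$ by automorphisms preserving $L$ (again because non-units absorb products), hence on $R^{+}/L$, and since $i^{R^{*}}=R^{+}\setminus L$ this action is transitive on the non-zero cosets; a non-trivial finite $p$-group whose automorphism group acts transitively on its non-identity elements is elementary abelian (its characteristic centre must be the whole group, and the common order of the non-identity elements is $p$), so $R^{+}/L$ is elementary abelian of order $p^{s}$. Next, the map $H=R^{+}\rtimes(i+L)\to R^{+}/L$ sending $(r,\lambda_{i+\ell})$ to $r+L$ is a well-defined epimorphism (well-definedness comes down to $-r+(i+\ell)r=\ell r\in L$) with kernel $L\rtimes(i+L)$; as the target is elementary abelian, $\Phi(H)\leq L\rtimes(i+L)$, and therefore $R^{+}\cap\Phi(H)\leq R^{+}\cap\bigl(L\rtimes(i+L)\bigr)=L$. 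For the opposite inclusion, the commutator of $(0,\lambda_{i+\ell})$ and $(i,\mathrm{id})$ in $H$ equals $(i+\ell-i,\mathrm{id})$, and as $\ell$ runs through $L$ these elements run through all of $L$ (using $L\trianglelefteq R^{+}$); hence $L\subseteq[H,H]\subseteq\Phi(H)$, and with $L\leq R^{+}$ this gives $L\leq R^{+}\cap\Phi(H)$. Thus $L=R^{+}\cap\Phi(H)$.

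Finally, $\Phi(H)$ is characteristic in $H\trianglelefteq G(R)$ and $R^{+}\trianglelefteq G(R)$, so $L=R^{+}\cap\Phi(H)$ is normal in $G(R)$. Suppose $L$ is non-abelian while $Z(L)$ is cyclic, and let $C$ be the unique subgroup of order $p$ of $Z(L)$. Then $C$ is characteristic in $L$, hence $C\trianglelefteq G(R)$, and since the conjugation action of the $p$-group $R^{+}$ on $C$ is trivial (as $p\nmid|\Aut C|=p-1$) we get $C\leq Z(R^{+})$. An inductive argument on $|R|$, passing to $G(R)/C$ and exploiting $C\leq Z(R^{+})$, then reduces to a strictly smaller situation of the same kind, bottoming out at the groups known not to be additive groups of local nearrings (the non-abelian group of order $8$, the generalized quaternion groups, and the groups excluded in \cite[Lemma~7]{IMYa_12} and \cite{CM_71}); this yields a contradiction. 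This concluding reduction is the second place where some care is required.
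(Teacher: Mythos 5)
This statement is not proved in the paper at all: it is quoted verbatim as Theorem~54 of Sysak's survey \cite{S_08}, so there is no internal argument to compare yours with; what can be assessed is whether your proposal would stand as a self-contained proof, and it does not. The decisive gap is the one you yourself flag: you never establish that $L$ is an ideal of $R$ (nor even that $L\trianglelefteq R^{+}$ together with $(i+\ell)r-r\in L$ for all $r\in R$), yet everything after the Sylow-order count depends on it --- the normality of $H$ via $i+L\trianglelefteq R^{*}$, the formation of $R^{+}/L$, the map $H\to R^{+}/L$, and the inclusion $L\leq[H,H]$. Worse, two of your computations silently use the \emph{right} distributive law, which a left nearring does not have: $(i+\ell_{1})(i+\ell_{2})=i+\ell_{1}+\ell_{2}+\ell_{1}\ell_{2}$ and $-r+(i+\ell)r=\ell r$ are both illegitimate expansions. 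The first is repairable, since $(i+\ell_{1})(i+\ell_{2})=i+\ell_{1}+(i+\ell_{1})\ell_{2}$ and $RL\subseteq L$ follows from the left-inverse argument you give; but the second is exactly the ideal axiom you admit you cannot verify, so the well-definedness of your epimorphism $H\to R^{+}/L$ is circular. A proof at this point has to invoke (or reprove) the known structural facts about $L$ in a local nearring --- e.g.\ the $(R,R)$-subgroup property used elsewhere in this paper via \cite[Lemma~3.2]{AHS_2004} and Maxson's results \cite{MCJ_68} --- rather than defer to ``radical theory'' without argument.

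The final clause also is not proved. Your sketch --- choose the subgroup $C$ of order $p$ in a cyclic $Z(L)$, push it into $Z(R^{+})$, and ``induct on $|R|$ by passing to $G(R)/C$'' --- does not go through as stated: $G(R)/C$ is not visibly the associated group of a smaller \emph{local nearring} (for that, $C$ would have to be an ideal of $R$ and the local structure would have to survive the quotient), and the base cases you name (the non-abelian group of order $8$, generalized quaternion groups, \cite[Lemma~7]{IMYa_12}, \cite{CM_71}) concern which groups can be additive groups of local nearrings, which is unrelated to the assertion that $Z(L)$ is non-cyclic. The intended route is purely group-theoretic: once $L=R^{+}\cap\Phi(H)$ is known, $L$ is a normal subgroup of the $p$-group $H$ contained in $\Phi(H)$, and a Hobby-type theorem on Frattini subgroups of $p$-groups (a non-abelian group with cyclic centre cannot occur as such a subgroup) yields the conclusion directly. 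So the proposal correctly handles the Sylow-order computation and the description of $i+L$ as a $p$-subgroup, but the normality of $H$, the identity $L=R^{+}\cap\Phi(H)$, and the non-cyclicity of $Z(L)$ all rest on unproven or incorrectly justified steps.
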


Considering $\Phi(R^+)\leq \Phi(H)$, we have the following corollary.

\begin{corollary}\label{corollary_FR}
$\Phi(R^+)\leq L=\Phi(H)\cap R^+$.
\end{corollary}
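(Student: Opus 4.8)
The statement has two parts. The equality $L=\Phi(H)\cap R^+$ is precisely the second assertion of Theorem~\ref{theorem_FR}, so nothing new needs to be done there; the content of the corollary is the inclusion $\Phi(R^+)\leq\Phi(H)\cap R^+$. Since $\Phi(R^+)\leq R^+$ is automatic, it suffices to establish $\Phi(R^+)\leq\Phi(H)$.

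The plan is to exploit that $H=R^+\rtimes(i+L)$ is a finite $p$-group — this is part of Theorem~\ref{theorem_FR}, which asserts $H$ is a Sylow normal $p$-subgroup of $G(R)$, and is consistent with $R$ having order $p^n$ — together with the fact that $R^+$ is a subgroup of $H$ (visible directly from the decomposition $H=R^+\rtimes(i+L)$). For any finite $p$-group $P$ one has the Burnside basis description $\Phi(P)=P^p[P,P]$. Applying this to $P=R^+$ and to $P=H$ and using the evident inclusions $(R^+)^p\leq H^p$ and $[R^+,R^+]\leq[H,H]$, we get $\Phi(R^+)=(R^+)^p[R^+,R^+]\leq H^p[H,H]=\Phi(H)$. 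Combining this with $\Phi(R^+)\leq R^+$ and with the equality from Theorem~\ref{theorem_FR} gives $\Phi(R^+)\leq R^+\cap\Phi(H)=L$, which is the corollary.

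There is essentially no real obstacle: the only things to verify are that $H$ is a $p$-group and that $R^+\leq H$, both of which are immediate from Theorem~\ref{theorem_FR}. If one prefers to avoid the $p$-power/commutator description of $\Phi$, an alternative is to note that $\Phi(H)$ is the intersection of the maximal subgroups of $H$ and to pull back maximal subgroups of $H$ containing $R^+\cap M$; but the Burnside basis formula is the shortest route, so I would use it directly.
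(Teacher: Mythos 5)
Your proof is correct and follows essentially the same route as the paper: the paper simply states the inclusion $\Phi(R^+)\leq\Phi(H)$ and combines it with $L=\Phi(H)\cap R^+$ from Theorem~\ref{theorem_FR}, and your Burnside-basis argument $\Phi(R^+)=(R^+)^p[R^+,R^+]\leq H^p[H,H]=\Phi(H)$ just supplies the standard $p$-group justification that the paper leaves implicit.
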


Let $R$ be a nearring with identity $i$ whose group $R^+$ is isomorphic to a group $G(p^m,p^n,p^d)$. Thus it follows from Corollary \ref{corollary_3} that ${R^{+}=\langle a \rangle+\langle b \rangle+\langle c \rangle}$ with elements $a$, $b$ and $c$, satisfying relations $ap^m=bp^n=cp^d=0$, $-b+a+b=a+c$ and $-a+c+a=-b+c+b=c$ with $1\leq d\leq n \leq m$, where $a=i$ and each element $x\in R$ is uniquely written in the form ${x=ax_1+bx_2+cx_3}$ with coefficients $0\le x_1<\! p^m$, $0\le x_2<\! p^n$ and $0\le x_3<\! p^d$.

Furthermore, we can assume $xa=ax=x$ for each $x\in R$ and there exist uniquely defined mappings ${\alpha\colon R\to \mathbb Z_{p^m}}$, ${\beta\colon R\to \mathbb Z_{p^n}}$ and ${\gamma\colon R\to \mathbb Z_{p^d}}$ such that
\[
xb=a\alpha(x)+b\beta(x)+c\gamma(x).\leqno (**)
\]

\begin{lemma}\label{lemma_8}
If $x=ax_1+bx_2+cx_3$ and $y=ay_1+by_2+cy_3$ are arbitrary elements of $R$, then
\[
xy=a(x_1y_1+y_2\alpha(x))+b(x_2y_1+y_2\beta(x))+c(-x_1x_2\binom{y_1}{2}
\]
\[
-\binom{y_2}{2}\alpha(x)\beta(x)-x_2y_1y_2\alpha(x)+x_3y_1+y_2\gamma(x)+x_1y_3\beta(x)-x_2y_3\alpha(x)),
\]
where mappings $\alpha\colon R\to \mathbb Z_{p^m}$, $\beta\colon R\to \mathbb Z_{p^n}$ and $\gamma\colon R\to \mathbb Z_{p^d}$ satisfy the conditions{\rm :}
\begin{itemize}
  \item[\rm{(0)}] $\alpha(0)\equiv 0\; (\!\!\mod p^m)$, $\beta(0)\equiv 0\; (\!\!\mod p^n)$ and ${\gamma(0)\equiv 0\; (\!\!\mod p^d)}$ if and only if the nearring $R$ is zero-symmetric{\rm ;}
  \item[\rm{(1)}] $\alpha(xy)\equiv x_1\alpha(y)+\alpha(x)\beta(y)\; (\!\!\mod p^m\;)${\rm ;}
  \item[\rm{(2)}] $\beta(xy)\equiv x_2\alpha(y)+\beta(x)\beta(y)\; (\!\!\mod p^n\;)${\rm ;}
  \item[\rm{(3)}] $\gamma(xy)\equiv -x_1x_2\binom{\alpha(y)}{2}-\alpha(x)\beta(x)\binom{\beta(y)}{2}-x_2\alpha(x)\alpha(y)\beta(y)$
\[
+x_3\alpha(y)+\gamma(x)\beta(y)+x_1\beta(x)\gamma(y)-x_2\alpha(x)\gamma(y)\; (\!\!\mod p^d\;).
\]
\end{itemize}
\end{lemma}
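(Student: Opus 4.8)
The plan is to derive the full multiplication of $R$ from three facts already at hand: the distributive law $x(y+z)=xy+xz$, the identity relation $xa=x$, and the definition $(**)$ of the maps $\alpha$, $\beta$, $\gamma$. Conditions (0)--(3) will then follow, respectively, from the definition of zero-symmetry and from associativity of the multiplication; note that the product formula itself needs only distributivity, $xa=x$, and $(**)$, not associativity.

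To compute $xy=x(ay_1+by_2+cy_3)$, apply the distributive law and pull the repeated sums through each factor, so that $xy=x(ay_1)+x(by_2)+x(cy_3)=xy_1+(xb)y_2+(xc)y_3$ by $xa=x$. The only term that is not immediate is $xc$. From the relation $-b+a+b=a+c$ of Corollary~\ref{corollary_3} one has $c=-a-b+a+b$, hence $xc=-x-xb+x+xb$, the commutator of $x$ and $xb$ in $R^+$. Expanding this commutator by bilinearity in the class-$2$ group $G(p^m,p^n,p^d)$ and using Corollary~\ref{corollary_1} --- so that the central $c$-parts of $x$ and $xb$ contribute nothing and only the $a$- and $b$-coordinates survive --- gives $xc=c\bigl(x_1\beta(x)-x_2\alpha(x)\bigr)$; in particular $xc\in\langle c\rangle$ is central.

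It remains to add $xy_1+(xb)y_2+(xc)y_3$ inside the non-abelian group $R^+$. For an arbitrary $u=au_1+bu_2+cu_3$, splitting off the central summand $cu_3$ and invoking Corollary~\ref{corollary_2} gives $ur=au_1r+bu_2r+c\bigl(u_3r-u_1u_2\binom r2\bigr)$; taking $(u,r)=(x,y_1)$ and $(u,r)=(xb,y_2)$ puts $xy_1$ and $(xb)y_2$ into coordinates, and $(xc)y_3=c\bigl((x_1\beta(x)-x_2\alpha(x))y_3\bigr)$ is already central. When adding the three contributions, the only step that uses non-commutativity is moving an $a$-summand past a $b$-summand via $bl+ak=-c(kl)+ak+bl$ of Corollary~\ref{corollary_1}; this produces the cross term $-x_2y_1y_2\alpha(x)$ in the $c$-coordinate, while $(xc)y_3$ contributes $x_1y_3\beta(x)-x_2y_3\alpha(x)$. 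Collecting the $a$-, $b$- and $c$-coordinates yields the claimed formula for $xy$.

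Finally, the side conditions. Setting $x=0$ in the product formula gives $0\cdot y=a\,y_2\alpha(0)+b\,y_2\beta(0)+c\bigl(y_2\gamma(0)-\binom{y_2}2\alpha(0)\beta(0)\bigr)$, which vanishes for every $y$ exactly when $\alpha(0)\equiv0$, $\beta(0)\equiv0$, $\gamma(0)\equiv0$ modulo $p^m$, $p^n$, $p^d$, that is, exactly when $R$ is zero-symmetric; this is (0). For (1)--(3), use associativity in the form $(xy)b=x(yb)$: the left-hand side is $a\alpha(xy)+b\beta(xy)+c\gamma(xy)$ by $(**)$, and the right-hand side is obtained from the product formula with second factor $yb=a\alpha(y)+b\beta(y)+c\gamma(y)$, i.e. by substituting $y_1\mapsto\alpha(y)$, $y_2\mapsto\beta(y)$, $y_3\mapsto\gamma(y)$; comparing the $a$-, $b$- and $c$-coordinates yields congruences (1), (2), (3). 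The main obstacle is bookkeeping rather than conceptual --- carrying the non-commutativity of $R^+$ correctly through the expansion of $xy_1+(xb)y_2+(xc)y_3$, especially the signs and positions of the $c$-terms --- and it is the fact that $xc$ is a central power of $c$ that keeps this manageable.
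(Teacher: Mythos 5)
Your proposal is correct and follows essentially the same route as the paper: compute $xc=c(x_1\beta(x)-x_2\alpha(x))$ from $c=-a-b+a+b$ and left distributivity, expand $xy=xy_1+(xb)y_2+(xc)y_3$ using Corollaries~\ref{corollary_1} and~\ref{corollary_2} (with the single non-commutative swap giving the $-x_2y_1y_2\alpha(x)$ term), and then obtain (1)--(3) by comparing coordinates in $(xy)b=x(yb)$. Your treatment of (0) by setting $x=0$ in the product formula is a harmless repackaging of the paper's direct argument with $0\cdot b$.
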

\begin{proof}
If  $R$ is a zero-symmetric nearring, then
\[
0=0\cdot b=a\alpha(0)+b\beta(0)+c\gamma(0),
\]
thus $\alpha(0)\equiv 0\; (\!\!\mod p^m)$, ${\beta(0)\equiv 0\; (\!\!\mod p^n)}$ and ${\gamma(0)\equiv 0\; (\!\!\mod p^d)}$. On the other hand, if the last congruences hold, then $0\cdot b=a\cdot 0+b\cdot 0+c\cdot 0=0$. Since $a$ is the multiplicative identity in $R$, we have $0\cdot a=a\cdot 0=0$. Moreover, from the equality $c=-a-b+a+b$ and the left distributive law it follows that $0\cdot c=-0\cdot a-0\cdot b+0\cdot a+0\cdot b=0$, hence
\[
0\cdot x=0\cdot (ax_1+bx_2+cx_3)=(0\cdot a)x_1+(0\cdot b)x_2+(0\cdot c)x_3=0.
\]
This proves statement~(0).

Next, using~(**) and Corollary~\ref{corollary_1}, we obtain
\[
xc=-xa-xb+xa+xb=-cx_3-bx_2-ax_1-c\gamma(x)-b\beta(x)-a\alpha(x)
\]
\[
+ax_1+bx_2+cx_3+a\alpha(x)+b\beta(x)+c\gamma(x)
\]
\[
=-bx_2-ax_1-b\beta(x)-a\alpha(x)+ax_1+bx_2+a\alpha(x)+b\beta(x)
\]
\[
=-bx_2+cx_1\beta(x)-b\beta(x)-ax_1-a(\alpha(x)-x_1)+bx_2+a\alpha(x)+b\beta(x)
\]
\[
=cx_1\beta(x)-b(x_2+\beta(x))-a\alpha(x)+bx_2+a\alpha(x)+b\beta(x)
\]
\[
=cx_1\beta(x)-b(x_2+\beta(x))-a\alpha(x)-cx_2\alpha(x)+a\alpha(x)+bx_2+b\beta(x)
\]
\[
=c(x_1\beta(x)-x_2\alpha(x))-b(x_2+\beta(x))+bx_2+b\beta(x)=c(x_1\beta(x)-x_2\alpha(x)).
\]
Therefore
\[
xy=(ax_1+bx_2+cx_3)y_1+(a\alpha(x)+b\beta(x)+c\gamma(x))y_2
\]
\[
+(cx_1\beta(x)-x_2\alpha(x))y_3.
\]
Corollary~\ref{corollary_2} implies that
\[
(ax_1+bx_2)y_1=ax_1y_1+bx_2y_1-cx_1x_2\binom{y_1}{2},
\]
\[
(a\alpha(x)+b\beta(x))y_2=ay_2\alpha(x)+by_2\beta(x)-c\binom{y_2}{2}\alpha(x)\beta(x)
\]
and
\[
bx_2y_1+ay_2\alpha(x)=ay_2\alpha(x)+bx_2y_1-cx_2y_1y_2\alpha(x).
\]
By the left distributive law, we have
\[
xy=a(x_1y_1+y_2\alpha(x))+b(x_2y_1+y_2\beta(x))+c(-x_1x_2\binom{y_1}{2}
\]
\[
-\binom{y_2}{2}\alpha(x)\beta(x)-x_2y_1y_2\alpha(x)+x_3y_1+y_2\gamma(x)+x_1y_3\beta(x)-x_2y_3\alpha(x)).
\]
Finally, the associativity of multiplication for all $x$, $y\in R$ implies that
\[
(xy)b=x(yb).\leqno 1)
\]

Thus
\[
(xy)b=a\alpha(xy)+b\beta(xy)+c\gamma(xy)\leqno 2)
\]
 and $yb=a\alpha(y)+b\beta(y)+c\gamma(y)$ by formula (**). Substituting the last expression in the right part of equality~1), we get
\[
x(yb)=a(x_1\alpha(y)+\alpha(x)\beta(y))+b(x_2\alpha(y)+\beta(x)\beta(y))\leqno 3)
\]
\[
+c(-x_1x_2\binom{\alpha(y)}{2}-\alpha(x)\beta(x)\binom{\beta(y)}{2}-x_2\alpha(x)\alpha(y)\beta(y)
\]
\[
+x_3\alpha(y)+\gamma(x)\beta(y)+x_1\beta(x)\gamma(y)-x_2\alpha(x)\gamma(y)).
\]
Comparing the coefficients $a$, $b$ and $c$ in~2)~and~3) by equality~1), we derive statements~(1)--(3) of the lemma.
\end{proof}

\section{Local nearrings on $G(p^m,p^n,p^d)$}

Let $R$ be a local nearring with identity $i$ whose group $R^+$ is isomorphic to the group $G(p^m,p^n,p^d)$. Then ${R^{+}=\langle a \rangle+\langle b \rangle+\langle c \rangle}$ with elements $a$, $b$ and $c$, satisfying relations $ap^m=bp^n=cp^d=0$, $-b+a+b=a+c$ and $-a+c+a=-b+c+b=c$ with $1\leq d\leq n \leq m$, where $a=i$ and each element $x\in R$ is uniquely written in the form ${x=ax_1+bx_2+cx_3}$ with coefficients $0\le x_1<\! p^m$, $0\le x_2<\! p^n$ and $0\le x_3<\! p^d$.

We show that the set $L$ of all non-invertible elements of $R$ is a subgroup of index $p$ in $R^+$.

\begin{theorem}\label{theorem_1}
The following statements hold{\rm :}
\begin{enumerate}
   \item ${L=\langle a\cdot p \rangle+\langle b \rangle+\langle c \rangle}$ and, in particular, the subgroup $L$ is of index $p$ in $R^+$ and $|R^*|=p^{m+n+d-1}(p-1)${\rm ;}
   \item $x=ax_1+bx_2+cx_3$ is an invertible element if and only if ${x_1\not\equiv 0 \; (\!\!\mod p\;)}$.
 \end{enumerate}
\end{theorem}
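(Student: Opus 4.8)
The plan is to determine the subgroup $L$ of non-invertible elements explicitly and then read off both assertions. Since $a=i$ is a unit we have $a\notin L$, so $L$ is a proper subgroup of $R^+$; and by Corollary~\ref{corollary_FR} it contains $\Phi(R^+)=\langle ap\rangle+\langle bp\rangle+\langle c\rangle$, which has index $p^2$ in $R^+$ because $G(p^m,p^n,p^d)$ is $2$-generated. Hence $[R^+:L]\in\{p,p^2\}$. It therefore suffices to exhibit generators $a=i,b,c$ of the type described in Corollary~\ref{corollary_3} with $b\in L$: then $\langle ap\rangle+\langle b\rangle+\langle c\rangle=\langle\Phi(R^+),b\rangle\subseteq L$, and since this subgroup already has index $p$ in $R^+$ while $L\neq R^+$, we get $L=\langle ap\rangle+\langle b\rangle+\langle c\rangle$. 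Statement~1 then follows from $|R^*|=|R^+|-|L|=p^{m+n+d}-p^{m+n+d-1}=p^{m+n+d-1}(p-1)$, and statement~2 follows because $\langle ap\rangle+\langle b\rangle+\langle c\rangle$ is exactly the set of $ax_1+bx_2+cx_3$ with $p\mid x_1$.

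If $m>n$ this is immediate. The additive order of $b$ is $p^n<p^m$, and $p^m$ is the exponent of $R^+$, which by Lemma~\ref{lemma_2} equals the common additive order of all elements of $R^*$. Hence $b\notin R^*$, i.e. $b\in L$, and we are done with the original generators.

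The case $m=n$ is the main obstacle, since then $b$ may have the full additive order $p^m$ and the order argument is not available. Here I would first exclude the possibility $L=\Phi(R^+)$. Assume it holds. Then $|R^*|=|R^+|-|\Phi(R^+)|=p^{2m+d-2}(p^2-1)$, by Lemma~\ref{rem} the group $R^*$ is isomorphic to a subgroup $A$ of $\Aut(R^+)$ with $i^A=R^*$, and by Theorem~\ref{theorem_FR} this $A$ has a normal Sylow $p$-subgroup; moreover $i$ has additive order $p^m$ and $R^+=R^*\cup L=i^A\cup\Phi(R^+)$. If $d>1$ (so also $m\geq d>1$) this contradicts Lemma~\ref{Aut2} with $g=i$. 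If $d=1$, then $G=G(p^m,p^m,p)$ is a non-metacyclic Miller--Moreno group of order $p^{2m+1}>8$, so by \cite{IMYa_12} we have $|R^*|=p^{2m}(p-1)$, forcing $|L|=p^{2m}\neq p^{2m-1}=|\Phi(R^+)|$, a contradiction. Thus $[R^+:L]=p$.

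It remains, still for $m=n$, to adjust $b$ so that it lies in $L$. Since $a\notin L$ and $[R^+:L]=p$, the quotient $R^+/L$ is cyclic of order $p$ generated by $a+L$; writing $b+L=t(a+L)$ and setting $b':=a^{j}b$ with $j\equiv -t\pmod p$ yields $b'\in L$. Using Lemma~\ref{lemma_4}, the equality $p^m=p^n$, and $p^d\mid\binom{p^m}{2}$ for odd $p$, one checks routinely that $a=i,b',c$ again satisfy the relations of Corollary~\ref{corollary_3}, so they form an admissible system of generators; then $L\supseteq\langle\Phi(R^+),b'\rangle=\langle ap\rangle+\langle b'\rangle+\langle c\rangle$, which has index $p$, whence $L=\langle ap\rangle+\langle b'\rangle+\langle c\rangle$. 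Renaming $b'$ as $b$ gives statements~1 and~2 as in the first paragraph. The genuinely delicate step is the exclusion of $L=\Phi(R^+)$ when $m=n$, which is precisely where Lemma~\ref{Aut2} (and, for $d=1$, the Miller--Moreno theorem) enters.
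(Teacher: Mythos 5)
Your proof is correct, and at the decisive point it coincides with the paper's: the case $m=n$ with $L=\Phi(R^+)$ is excluded exactly as in the paper, by Lemma~\ref{Aut2} applied to $g=i$ (with the normal Sylow $p$-subgroup of $A$ supplied, as you note, by Theorem~\ref{theorem_FR} through $i+L$) when $d>1$, and by the Miller--Moreno result of \cite{IMYa_12} when $d=1$. Where you genuinely differ is in the reduction to that case and in the final identification of $L$. The paper assumes $|R^+:L|=p^t$ with $t>1$ and invokes Lemma~\ref{rem} together with the orders of $\Aut R^+$ from Lemma~\ref{lemma_5} to force $t=2$ and $m=n$; you instead get $|R^+:L|\in\{p,p^2\}$ immediately from $\Phi(R^+)\le L$ (Corollary~\ref{corollary_FR}) and $i\notin L$, and you settle $m>n$ outright by the elementary observation that $b$, having additive order $p^n<p^m$, cannot lie in $R^*$ by Lemma~\ref{lemma_2}; this makes Lemma~\ref{lemma_5} dispensable. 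Your route also buys a point that the paper passes over in silence: once $|R^+:L|=p$ is known, the paper simply writes $L=\langle a\cdot p\rangle+\langle b\rangle+\langle c\rangle$, which presupposes $b\in L$, and for $m=n$ this really does depend on the choice of the second generator (for instance, in the nearring of Theorem~\ref{theorem_2} with $m=n$ the element $a+b$ is invertible, yet $a$, $a+b$, $c$ is an equally admissible system in the sense of Corollary~\ref{corollary_3}); your normalization $b\mapsto aj+b$ with $j\equiv -t\pmod{p}$, together with the routine check (odd $p$, $p^d\mid\binom{p^m}{2}$) that the relations of Corollary~\ref{corollary_3} persist, is exactly the adjustment needed to make the statement exact, while the index of $L$ and the value $|R^*|=p^{m+n+d-1}(p-1)$ are of course independent of it. So: correct, slightly more elementary in the reduction, and more careful than the paper at the last step.
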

\begin{proof}
Assume that $|R^+:L|=p^t$, $t>1$. Since $R=R^*\cup L$, it follows that
\[
|R^*|=|R|-|L|=p^{m+n+d}-p^{m+n+d-t}=p^{m+n+d-t}(p^t-1).
\]
According to Lemma~\ref{rem}, the group $R^*$ is isomorphic to the subgroup $A$ of the automorphism group of $R^+$ and so $|R^*|$ divides $|\mbox{Aut} R^+|$. According to statement~1) of Lemma~\ref{lemma_5} it is possible only if $t=2$ and $m=n$.

Assume that $|R^+:L|=p^2$ and $m=n$. If $d=1$, then it is impossible because of~\cite[Theorem 2]{IMYa_12}. Now let $d>1$. Since $|R^+:\Phi(R^+)|=p^2$ and Corollary~\ref{corollary_FR}, we have $L=\Phi(R^+)$. Hence by Lemma~\ref{rem}, we get $R^+=a^A\cup \Phi(R^+)$, which is impossible by Lemma~\ref{Aut2}. This contradiction shows that our assumption is false and so $|R^+:L|=p$.

It is clear that $R/L$ is a nearfield and so the factor-group $R^+/L^+$ is an elementary abelian $p$-group. Thus for $a\notin L$ we have $ap\in L$ and so ${L=\langle a\cdot p \rangle+\langle b \rangle+\langle c \rangle}$.
Therefore $R^*=R\setminus L$ and hence
\[
{R^*=\{ax_1+bx_2+cx_3\mid x_1\not\equiv 0 \; (\!\!\mod p\;)\}}.
\]
\end{proof}

Applying statement~(1) of Theorem~\ref{theorem_1} to Lemma~\ref{lemma_8}, we get the following formula for multiplying elements $x=ax_1+bx_2+cx_3$ and $y=ay_1+by_2+cy_3$ in the local nearring $R$.

\begin{corollary}\label{corollary_4}
If $x$, $y\in R$ with $1\leq d\leq n \leq m$ and $xb=a\alpha(x)+b\beta(x)+c\gamma(x)$, then
\[
xy=a(x_1y_1+y_2\alpha(x))+b(x_2y_1+y_2\beta(x))+c(-x_1x_2\binom{y_1}{2}
\]
\[
-\binom{y_2}{2}\alpha(x)\beta(x)-x_2y_1y_2\alpha(x)+x_3y_1+y_2\gamma(x)+x_1y_3\beta(x)-x_2y_3\alpha(x)),
\]
where mappings $\alpha\colon R\to \mathbb Z_{p^m}$, $\beta\colon R\to \mathbb Z_{p^n}$ and $\gamma\colon R\to \mathbb Z_{p^d}$ and the following statements hold{\rm :}
\begin{itemize}
 \item[\rm{(0)}] $\alpha(0)\equiv 0\; (\!\!\mod p^m)$, $\beta(0)\equiv 0\; (\!\!\mod p^n)$ and ${\gamma(0)\equiv 0\; (\!\!\mod p^d)}$ if and only if the nearring $R$ is zero-symmetric{\rm ;}
 \item[\rm{(1)}] $\alpha(x)\equiv 0 \; (\!\!\mod p)${\rm ;}
 \item[\rm{(2)}] if $\beta(x)\equiv 0 \; (\!\!\mod p)$, then $x_1\equiv 0 \; (\!\!\mod p)${\rm ;}
 \item[\rm{(3)}] $\alpha(xy)\equiv x_1\alpha(y)+\alpha(x)\beta(y)\; (\!\!\mod p^m\;)${\rm ;}
 \item[\rm{(4)}] $\beta(xy)\equiv x_2\alpha(y)+\beta(x)\beta(y)\; (\!\!\mod p^n\;)${\rm ;}
 \item[\rm{(5)}] $\gamma(xy)\equiv -x_1x_2\binom{\alpha(y)}{2}-\alpha(x)\beta(x)\binom{\beta(y)}{2}-x_2\alpha(x)\alpha(y)\beta(y)$
\[
+x_3\alpha(y)+\gamma(x)\beta(y)+x_1\beta(x)\gamma(y)-x_2\alpha(x)\gamma(y)\; (\!\!\mod p^d\;).
\]
\end{itemize}
\end{corollary}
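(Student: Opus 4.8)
The displayed multiplication formula together with items~(0), (3), (4) and~(5) is nothing but a verbatim copy of Lemma~\ref{lemma_8} (with its conditions~(1)--(3) renumbered), so the only genuinely new content is items~(1) and~(2), and the plan is to deduce these two congruences from the description of the units of $R$ furnished by Theorem~\ref{theorem_1}.

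For item~(1) I would start from the observation that $b\in L$ --- immediate from statement~(1) of Theorem~\ref{theorem_1}, or from statement~(2) since the $a$-coordinate of $b$ equals $0$. The one point that needs a small additional argument is that $L$ absorbs left multiplication, i.e.\ $xb\in L$ for every $x\in R$; this is part of the general structure theory of local nearrings (it is exactly what makes $R/L$ a nearfield, as already used in the proof of Theorem~\ref{theorem_1}), but if one wants to be self-contained it follows from a short finite-monoid argument: were $xb$ invertible, then $vx$ would be a left inverse of $b$ in the finite monoid $(R,\cdot)$ for a suitable $v$, forcing $b^{m}=i$ for some $m\ge 1$ and hence $b$ invertible, contradicting $b\in L$. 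Once $xb\in L$ is known, I write $xb=a\alpha(x)+b\beta(x)+c\gamma(x)$ as in formula~$(**)$ and apply statement~(2) of Theorem~\ref{theorem_1} to the element $xb$: non-invertibility of $xb$ means precisely that its $a$-coordinate $\alpha(x)$ satisfies $\alpha(x)\equiv 0\pmod p$, which is item~(1).

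For item~(2) I would argue contrapositively: it suffices to show that if $x=ax_1+bx_2+cx_3$ is invertible, equivalently $x_1\not\equiv 0\pmod p$ by statement~(2) of Theorem~\ref{theorem_1}, then $\beta(x)\not\equiv 0\pmod p$. Let $x^{-1}\in R^{*}$ be the multiplicative inverse of $x$. From $ib=b$ and the uniqueness of the representation one reads off $\alpha(i)=0$, $\beta(i)=1$, $\gamma(i)=0$. Applying the $\beta$-identity of item~(4), which is condition~(2) of Lemma~\ref{lemma_8}, to the product $x\cdot x^{-1}=i$ gives
\[
1=\beta(i)\equiv x_2\,\alpha(x^{-1})+\beta(x)\,\beta(x^{-1})\pmod{p^{n}}.
\]
Reducing modulo $p$ and using item~(1), already established, which yields $\alpha(x^{-1})\equiv 0\pmod p$, we obtain $\beta(x)\beta(x^{-1})\equiv 1\pmod p$, whence $\beta(x)\not\equiv 0\pmod p$, as required. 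I do not expect a genuine obstacle: all the real work is already contained in Theorem~\ref{theorem_1} and Lemma~\ref{lemma_8}, and the only place calling for care is the bookkeeping in item~(2), namely using the correct values $\alpha(i)=0$ and $\beta(i)=1$ and invoking item~(1) for $\alpha(x^{-1})$ rather than for $\alpha(x)$.
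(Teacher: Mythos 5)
Your proposal is correct, and it diverges from the paper in an interesting way on item~(2). For item~(1) you follow essentially the paper's route: the paper also reduces everything to showing $xb\in L$ and then reads off $\alpha(x)\equiv 0\pmod p$ from the description of $L$ in Theorem~\ref{theorem_1}; the only difference is that the paper simply cites the fact that $L$ is an $(R,R)$-subgroup (statement~2 of Lemma~3.2 in the Amberg--Hubert--Sysak reference), whereas you supply a self-contained finite-monoid argument (a left-invertible element of a finite monoid is invertible, so $xb\in R^*$ would force $b\in R^*$), which is a perfectly valid and pleasantly elementary substitute. For item~(2), however, the paper argues quite differently: it takes $y=c$ in the multiplication formula to get $xc=c\bigl(x_1\beta(x)-x_2\alpha(x)\bigr)$, observes that $\beta(x)\equiv\alpha(x)\equiv 0\pmod p$ forces $xc$ into $\langle c\cdot p\rangle$, and concludes $x\in L$ (implicitly because left multiplication by an invertible element is an additive automorphism, so $xc$ would have to have the full additive order $p^d$ of $c$), whence $x_1\equiv 0\pmod p$. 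You instead pass to the inverse: applying congruence~(4) to $x\cdot x^{-1}=i$ together with $\beta(i)=1$ and item~(1) for $x^{-1}$ gives $\beta(x)\beta(x^{-1})\equiv 1\pmod p$ for invertible $x$, which is the contrapositive of~(2). Your version has the advantage of using only the already-stated congruences and Theorem~\ref{theorem_1}, with no appeal to the order argument the paper leaves implicit; the paper's version, on the other hand, exploits the explicit formula for $xc$ and so stays entirely inside the computation of Lemma~\ref{lemma_8}. Both arguments are sound, and your bookkeeping ($\alpha(i)=0$, $\beta(i)=1$, and invoking item~(1) for $x^{-1}$) is exactly right.
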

\begin{proof}
Indeed, statements~(0), (3)--(5) repeat statements~(0)--(4) of Lemma~\ref{lemma_8}. Since $L=\langle a\cdot p \rangle+\langle b \rangle+\langle c \rangle$ by Theorem~\ref{theorem_1} and $L$ is an $(R,R)$-subgroup in $R$ by statement~2)~\cite[Lemma~3.2]{AHS_2004}, it follows that $xb\in L$ and hence $\alpha(x)\equiv 0 \; (\!\!\mod p)$, proving statement~(1). Taking $y=c$, we have $xc=c(x_1\beta(x)-x_2\alpha(x))$. Thus, if $\beta(x)\equiv 0 \; (\!\!\mod p)$, then $xc=0\; (\!\!\mod p\;)$, and so $x\in L$. Thus $x_1\equiv 0 \; (\!\!\mod p)$ by Theorem~\ref{theorem_1}, proving statement~(2).
\end{proof}

The following theorem shows that conditions given in Theorem~\ref{theorem_1} are sufficient for existing of finite local nearrings on $G(p^m,p^n,p^d)$. Moreover, each group $G(p^m,p^n,p^d)$ is the additive group of a nearring with identity.

\begin{theorem}\label{theorem_2}
For each prime $p$ and positive integers $m$, $n$ and $d$ with $1 \leq d \leq n \leq m$  there exists a local nearring $R$ whose additive group $R^+$ is isomorphic to the group $G(p^m,p^n,p^d)$.
\end{theorem}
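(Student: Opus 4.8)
The plan is to exhibit an explicit local nearring. Using Corollary~\ref{corollary_3} I identify $G=G(p^m,p^n,p^d)$ with the set of triples $x=(x_1,x_2,x_3)$, $0\le x_1<p^m$, $0\le x_2<p^n$, $0\le x_3<p^d$, the triple standing for $ax_1+bx_2+cx_3$; by Corollaries~\ref{corollary_1} and~\ref{corollary_2} the group operation is
\[
(x_1,x_2,x_3)+(y_1,y_2,y_3)=\bigl(x_1+y_1,\ x_2+y_2,\ x_3+y_3-x_2y_1\bigr),
\]
the three coordinates reduced modulo $p^m$, $p^n$, $p^d$ respectively; since $p^d\mid p^n\mid p^m$, every product occurring below is well defined modulo the relevant power of $p$. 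On this set I would define the multiplication
\[
xy=\Bigl(x_1y_1,\ x_1y_2+x_2y_1,\ x_3y_1+x_1^2y_3-x_1x_2\binom{y_1}{2}\Bigr),
\]
which is exactly the product furnished by Lemma~\ref{lemma_8} for the choice $\alpha\equiv 0$, $\beta(x)\equiv x_1\pmod{p^n}$, $\gamma\equiv 0$, i.e.\ $xb=bx_1$; equivalently it is recovered from $xa=x$, $xb=bx_1$, $xc=cx_1^2$ by left distributivity.

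I would then check the nearring axioms directly. The element $a=(1,0,0)$ is a two-sided identity, and $0\cdot x=0$ for every $x$, so $R=(G,+,\cdot)$ is zero-symmetric. Left distributivity $x(y+z)=xy+xz$ holds: the first two coordinates are immediate, and the third reduces to the identity $\binom{y_1+z_1}{2}=\binom{y_1}{2}+\binom{z_1}{2}+y_1z_1$. Associativity $(xy)z=x(yz)$ is verified in the same way: the first two coordinates match at once, and the third reduces to $\binom{y_1z_1}{2}=\binom{y_1}{2}z_1+y_1^2\binom{z_1}{2}$. (Alternatively one checks that $\alpha\equiv 0$, $\beta(x)\equiv x_1$, $\gamma\equiv 0$ satisfy conditions~(1)--(3) of Lemma~\ref{lemma_8}, which here are all trivial, and uses that $(xy)c=x(yc)$ follows from conditions~(1)--(2), so that, together with left distributivity, associativity reduces to $(xy)b=x(yb)$.) Hence $R$ is a nearring with identity $a$ whose additive group is isomorphic to $G(p^m,p^n,p^d)$.

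Finally I would show that $R$ is local. From the displayed addition, $x\mapsto x_1\bmod p$ is an epimorphism of $R^+$ onto $\mathbb Z_p$ with kernel the index-$p$ subgroup $L=\langle ap\rangle+\langle b\rangle+\langle c\rangle$. If $p\nmid x_1$, then $x_1$ is a unit modulo $p^m$; both $y\mapsto xy$ and $y\mapsto yx$ are then injective on the finite set $R$ (the product determines $y$ successively, coordinate by coordinate), hence bijective, so there are $y$ with $xy=a$ and $z$ with $zx=a$, and then $z=z(xy)=(zx)y=y$, whence $x\in R^*$. Conversely $xy=a$ forces $x_1y_1\equiv 1\pmod{p^m}$, so $p\nmid x_1$. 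Therefore the set of non-invertible elements of $R$ is exactly the subgroup $L$, so $R$ is a local nearring, and $|R^*|=p^{m+n+d}-p^{m+n+d-1}=p^{m+n+d-1}(p-1)$, in agreement with Theorem~\ref{theorem_1}.

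The only real work is the two binomial identities used above; the point requiring care is that, $R^+$ being non-abelian, left distributivity is a genuine computation and not a formality --- but it collapses to those identities, so no serious obstacle arises.
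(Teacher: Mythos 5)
Your construction is exactly the paper's: the same multiplication arising from the choice $\alpha\equiv 0$, $\beta(x)\equiv x_1$, $\gamma\equiv 0$ in Corollary~\ref{corollary_4}, so the proposal is correct and takes essentially the same approach. If anything, your direct verification of left distributivity and associativity (reduced to the two binomial identities) and of locality via injectivity of the maps $y\mapsto xy$ and $y\mapsto yx$ when $p\nmid x_1$ is more complete than the paper's proof, which only checks conditions (0)--(5) of Corollary~\ref{corollary_4} and leaves the nearring axioms and the locality of $R$ largely implicit.
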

\begin{proof}
Let $R$ be an additively written group $G(p^m,p^n,p^d)$ with generators $a$, $b$ and $c$ satisfying the relations $|a|=p^m$, $|b|=p^n$, $|c|=p^d$, $b^{-1}ab=ac$ and $a^{-1}ca=b^{-1}cb=c$. Then ${G=\langle a \rangle+\langle b \rangle+\langle c \rangle}$ and each element $x\in R$ is uniquely written in the form $x=ax_1+bx_2+cx_3$ with coefficients $0\le x_1<\! p^m$, $0\le x_2<\! p^n$ and $0\le x_3<\! p^d$. In order to define a multiplication ``$\cdot$'' on $R$ in such a manner that $(R,+,\cdot)$ is a local nearring.

Assume that $1 \leq d \leq n \leq m$ and let the mappings from Corollary~\ref{corollary_4} be defined by the congruences ${\alpha(x)\equiv 0\; (\!\!\mod p^m)}$, $\beta(x)\equiv x_1\; (\!\!\mod p^n)$ and $\gamma(x)\equiv 0\; (\!\!\mod p^d)$ for each $x\in G$. Then
\[
x\cdot y=ax_1y_1+b(x_2y_1+x_1y_2)+c(-x_1x_2\binom{y_1}{2}+x_3y_1+x_1^2y_3).
\]

It suffices to show that the mappings $\alpha: G\to \mathbb Z_{p^m}$, $\beta: G\to \mathbb Z_{p^n}$ and $\gamma: G\to \mathbb Z_{p^d}$ with respect to the multiplication ``$\cdot$'' satisfy statements~(0)--(5) of Corollary~\ref{corollary_4}.

Indeed, $\alpha(0)\equiv 0\; (\!\!\mod p^m)$, $\beta(0)\equiv 0\; (\!\!\mod p^n)$ and ${\gamma(0)\equiv 0\; (\!\!\mod p^d)}$
by the definition. Since ${0\cdot y=a\cdot 0+b\cdot 0+c\cdot 0=0}$ for each $y\in G$, this implies that a multiplication ``$\cdot$'' is zero-symmetric and so, proving statement~(0) of Corollary~\ref{corollary_4}. Indeed, we have  $\alpha(x)\equiv 0\; (\!\!\mod p)$ and ${x_1\equiv 0 \; (\!\!\mod p)}$, if ${\beta(x)\equiv 0 \; (\!\!\mod p)}$, so that statements~(1)~and~(2) of Corollary~\ref{corollary_4} hold. Clearly, we derive statements~(3)--(5) of Corollary~\ref{corollary_4}.
\end{proof}

As corollary we have the following assertion.

\begin{corollary}\label{corollary_5}
Each group $G(p^m,p^n,p^d)$ is the additive group of a nearring with identity.
\end{corollary}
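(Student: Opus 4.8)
The plan is to read the corollary straight off Theorem~\ref{theorem_2}. Throughout the paper ``nearring'' already means a left distributive nearring carrying a multiplicative identity, and a \emph{local} nearring is, by definition, such a nearring whose set of non-invertible elements happens to form an additive subgroup; in particular every local nearring is a nearring with identity. So I would fix an arbitrary group of the form $G(p^m,p^n,p^d)$. By the way this notation was set up in Section~2, its parameters automatically satisfy $1\leq d\leq n\leq m$, which is exactly the hypothesis of Theorem~\ref{theorem_2}. That theorem then yields a local nearring $R$ with $R^+\cong G(p^m,p^n,p^d)$, and this $R$ is the desired nearring with identity. Hence every group $G(p^m,p^n,p^d)$ is the additive group of a nearring with identity, which is the assertion.

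If one wishes an argument that does not even mention the local property, one can instead invoke the explicit multiplication
\[
x\cdot y=ax_1y_1+b(x_2y_1+x_1y_2)+c(-x_1x_2\binom{y_1}{2}+x_3y_1+x_1^2y_3)
\]
constructed in the proof of Theorem~\ref{theorem_2} as the special case $\alpha\equiv 0$, $\beta(x)\equiv x_1$, $\gamma\equiv 0$ of the general product in Lemma~\ref{lemma_8}. Here $a$ is a two-sided identity, the left distributive law holds because the formula is exactly the one derived in Lemma~\ref{lemma_8} from left distributivity, and associativity is guaranteed because these $\alpha,\beta,\gamma$ satisfy conditions (1)--(3) of Lemma~\ref{lemma_8} (equivalently (3)--(5) of Corollary~\ref{corollary_4}). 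All of these verifications are already carried out in the proof of Theorem~\ref{theorem_2}, so nothing new is required.

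There is essentially no obstacle here: the single point worth a moment's care is to confirm that the range of triples $(m,n,d)$ permitted in the corollary coincides with the range handled by Theorem~\ref{theorem_2}. Since $G(p^m,p^n,p^d)$ is only defined for $1\leq d\leq n\leq m$, the two ranges agree, and the corollary follows with no further construction.
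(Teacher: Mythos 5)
Your proposal is correct and matches the paper's intent exactly: the paper states Corollary~\ref{corollary_5} as an immediate consequence of Theorem~\ref{theorem_2}, since under the paper's conventions a local nearring is in particular a nearring with identity and the parameter range $1\leq d\leq n\leq m$ is the same in both statements. Nothing further is needed.
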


\end{document}